\numberwithin{equation}{section}
\def\Gm{\mathbb{G}_m}
\def\R{\mathbb{R}}
\def\Q{\mathbb{Q}}
\def\P{\mathbb{P}}
\def\C{\mathbb{C}}
\def\A{\mathbb{A}}
\def\Z{\mathbb{Z}}
\def\Cs{\mathbb{C}^\times}
\def\x{\times}
\def\ox{\otimes}
\def\la{\lambda}
\def\xx{\star}
\newcommand{\gxg}[1]{\left(\sqrt{#1} \ltimes C_B(T) \right)^{\times 2}}
\def\tlt0{(\tilde LT/T)_0}
\def\cU{{\mathcal{U}}}
\def\ft{\mathfrak{t}}
\def\ftz{\mathfrak{t}_\mathbb{Z}}
\def\ftdz{\mathfrak{t}^\vee_\mathbb{Z}}
\def\lr{\mathbb{G}_m^{rot}}
\DeclareMathOperator{\ec}{Spec }
\newcommand{\abcd}[4]{\left(\begin{array}{cc}
  #1 & #2 \\
  #3 & #4 \\
\end{array} \right)}
\newcommand{\ab}[2]{\left(\begin{array}{c}
  #1 \\
  #2 \\
\end{array} \right)}
\newcommand{\mc}[1]{\mathcal{#1}}
\newcommand{\ol}[1]{\overline{#1}}
\newtheorem{thm}{Theorem}[section]
\newtheorem{prop}[thm]{Proposition}
\newtheorem{lemma}[thm]{Lemma}
\newtheorem{cor}[thm]{Corollary}
\theoremstyle{definition}
\newtheorem{definition}[thm]{Definition}
\theoremstyle{remark}
\newtheorem{rmk}{Remark}
\theoremstyle{notation}
\newtheorem{example}{Example}
\begin{document}

 \title{Infinite type toric varieties and Voronoi Tilings}
\author{ Pablo Solis}
\address{Department of Mathematics,
 Caltech, 1200 E California Blvd,
Pasadena, CA 91125}
\email{pablos.inbox@gmail.com}

\thanks{I would like to thank my Ph.D advisor Constantin Teleman for suggesting the problem that ultimately lead to this paper. I would also like to thank Melody Chan and Bernd Sturmfels for telling me about Voronoi tilings and Xinwen Zhu for valuable input on the algebraic theory of loop groups. I thank Michel Van Garrel and Bumsig Kim for their support during my visit to KIAS where part of this paper was completed. Finally, the present form of the results here have benefited from the conversations, encouragement and generosity of Dan Halpern-Leistner, Michael Thaddeus, Johan Martens, Tom Graber, Chris Manon, Nadejda Blagorodnova, Lucia Solis and Isreal Zeteno. }

\maketitle

\tableofcontents

\section{Introduction}
An infinite type toric variety is a normal toric variety given by a fan with infinitely many cones. We construct examples in this paper coming from representation theory of loop groups. In fact this construction is a special case of the results presented in \cite{Solis} where an analogue of the wonderful compactification of the loop group of a simple group is constructed. The approach also works for loop groups of tori and that is what we present here.  

Moreover, for the loop group of a torus $T$ with Lie algebra $\ft$, the construction can be expressed using only the data of a nondegenerate symmetric bilinear form $B \colon \ftz \x \ftz \to \Z$ and the choice of a central extension 
\[
1 \to \Gm \to C_B(T) \to T \x \ftz \to 1.
\]
From this data we construct an infinite type toric variety $X_{B,H}$ with a particularly nice fan.

The main result is that the form $B$ together with the lattice of co-characters $\ftz$ determine a Voronoi tiling of $\ft_\R$ and the fan of $X_{B,H}$ is given by the cone on the Voronoi tiling. We expect this construction has a relation to Alexeev and Nakamura's work on degeneration of Abelian varieties to toric varieties as well as to log and tropical geometry.

In the special case of $T = \Gm$ the compactification constructed here recovers the universal cover $C\to \A^1$ of the Tate curve; the generic fiber of $C$ is $\Gm$ and the special fiber $C_0 = \cup_{i \in \Z} \P^1$ is an infinite chain of projective lines. The total space $C$ is an infinite type toric variety and its fan is drawn in figure \ref{fig:rank1}. This curve provides a local model used in \cite{Tolland} to construct gauged Gromov-Witten invariants for the stack $pt/\Gm$. For higher rank tori we do not have a modular interpretation yet for the compactification in terms of bundles on curves but the spaces $X_{B,H}$ do appear to be higher rank versions of the local model used in \cite{Tolland}.

Another connection is with torus orbits in flag varieties as described in \cite{TorbMR1105698}. A special case of \cite[thm\;1]{TorbMR1105698} says that the closure of a generic torus orbit in $G/B$ is the toric variety whose fan is given by the Weyl chamber decomposition of $\ft_\R$. The same toric variety is obtained by taking the closure of the torus inside the wonderful compactification of adjoint group $G_{ad}$. We show this relationship breaks down in the affine case. In general a generic torus orbit closure in an affine flag variety is much smaller than the closure in the wonderful compactification constructed in \cite{Solis}. The latter does have the property that is fan is determined by the Weyl alcove decomposition of $\ft_\R$. One can obtain the latter from the former by taking a limit of orbit closures for increasingly generic points. See specifically theorem  \ref{thm:Torbs}.

Section 2 briefly recalls the defintions of Voronoi and Delaunay tilings. Section 3 defines the central extension $C_B(T)$ and a related semidirect product $\lr \ltimes C_B(T)$ and constructs representations used to define the infinite type toric varieties. The main result is theorem \ref{thm:main}. Section 4 explains the connection with loop groups. Section 5 explains the connection between the wonderful compactification and generic torus orbit closures in flag varieties.

\section{Voronoi and Delaunay}\label{s:Voronoi}
The material in this section largely follows \cite{MR1707764}. In subsequent sections all lattices and their duals come from the characters and co-characters of a torus $T$. Anticipating this application let $(\ftz,\ftdz)$ be a pair of dual lattices; $\ft_\R$ will be the associated $\R$ vector space and $B(x,y)$ will be an inner product on $\ft_\R$; $|x|^2:= B(x,x)$. The convex hull of $S \subset \ft_\R$ is denoted $conv(S)$.

For $x \in \ft_\R$ we say $\lambda \in \ftz$ is an {\it $x$-station} if $|x - \lambda| = \min_{\lambda' \in \ftz} |x - \lambda'|$. Let $sta(x)$ be the set of all $x$-stations; they are the lattice points closest to $x$. A {\it Delaunay cell $\sigma$} is defined as $\sigma = conv(sta(x))=: D(x)$. In general different point $x\neq x'$ can give $D(x) = D(x')$.

The {\it Voronoi cell $V(\sigma)$} associated to $\sigma$ is $\ol{\{ x\in \ft_\R| \sigma = D(x)\}}$. By definition $D(x)$ is constant if $x$ is in the interior $V(\sigma)^0$ and we denote it as $D(\hat{\sigma})$.

\begin{example}\label{ex:2-1-12}
Let $B$ be the inner product on $\R^2$ given by $\abcd{2}{-1}{-1}{2}$. Then the Dalaunay tiling is given by triangles and the Voronoi tiling is hexagonal.
\begin{figure}[htm]
\includegraphics[scale=0.1]{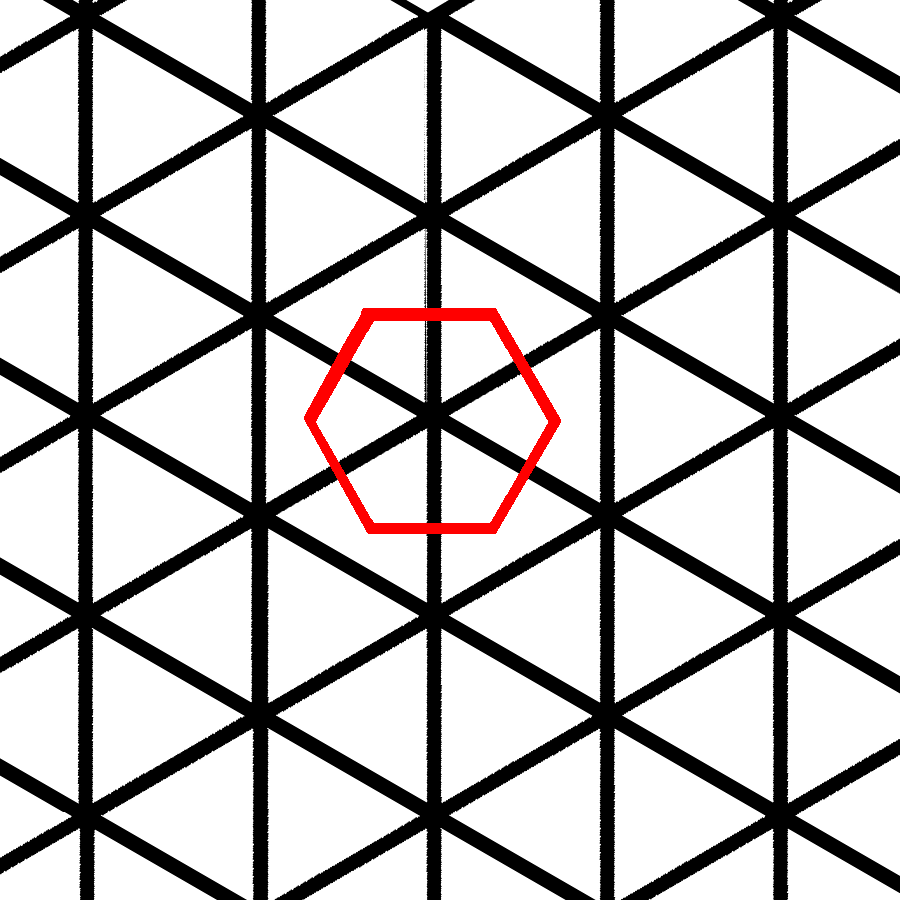}
\includegraphics[scale=0.1]{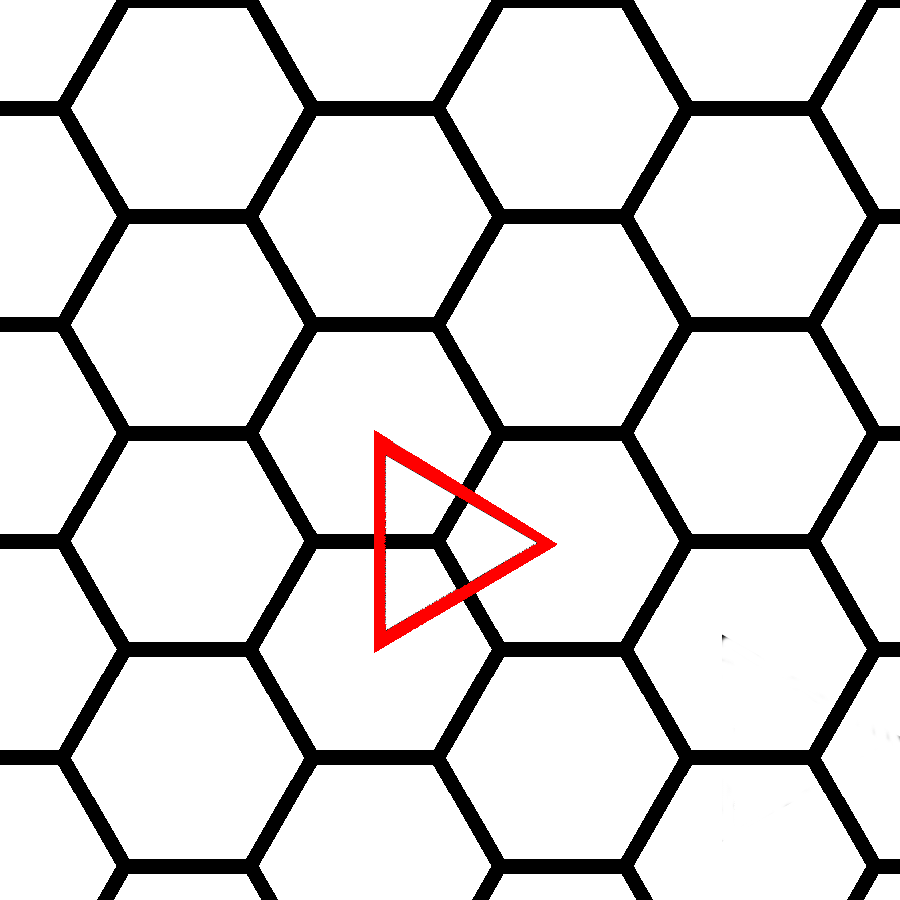}
\caption{Left: the Delaunay tiling with a dual cell marked in red. Right: Voronoi tiling with a dual cell marked in red.}
\label{fig:hex tri}
\end{figure}
\end{example}

The following result is taken from \cite{MR1707764}.
\begin{prop}
There is a 1-to-1 correspondence between Delaunay and Voronoi cells given by $V(\sigma) = \hat{\sigma}$ and $\sigma= D(\hat{\sigma})$ and $\dim \sigma + \dim \hat{\sigma} = \dim \ft_\R$. 
\end{prop}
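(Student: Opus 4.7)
The plan is to establish the correspondence by showing that, starting from a Delaunay cell $\sigma$, the Voronoi cell $V(\sigma)$ has a concrete description as an intersection of an affine subspace with halfspaces, and then verifying that this operation is inverse to the one sending a Voronoi cell to the convex hull of the stations of any point in its relative interior.

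First I would write a Delaunay cell as $\sigma = \operatorname{conv}\{\lambda_0,\ldots,\lambda_k\} = D(y)$ for some $y$, where $\{\lambda_i\} = \operatorname{sta}(y)$, and then show directly from the definition that
\[
V(\sigma) = \bigl\{ x \in \ft_\R \;:\; |x-\lambda_0| = |x-\lambda_i| \text{ for all } i, \text{ and } |x-\lambda_0| \le |x-\mu| \text{ for all } \mu \in \ftz\bigr\}.
\]
The containment $\supseteq$ is easy: any such $x$ has $\operatorname{sta}(x) \supseteq \{\lambda_i\}$; to get equality of the stations and hence $D(x)=\sigma$ one must further argue that the set of $x$ whose stations contain strictly more lattice points lies on a proper affine subspace and is therefore closed and nowhere dense inside the righthand side, so its complement within the righthand side is open and dense and has closure equal to the whole righthand side. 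The reverse containment $\subseteq$ uses the continuity of the distance function: if $x \in V(\sigma)$ then $x$ is a limit of points $x_n$ with $D(x_n) = \sigma$, and passing to the limit preserves both the equidistance condition among the $\lambda_i$ and the minimality of the common distance against all other lattice points.

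Next I would read off the dimension claim from this description. The equidistance conditions $|x-\lambda_i|^2 = |x-\lambda_0|^2$ expand to the linear equations $2B(x,\lambda_i-\lambda_0) = |\lambda_i|^2 - |\lambda_0|^2$, and these cut out an affine subspace $A_\sigma \subset \ft_\R$ whose direction is the annihilator of $\operatorname{span}\{\lambda_i-\lambda_0\}$. Since $\dim\sigma = \dim\operatorname{span}\{\lambda_i-\lambda_0\} = k$, nondegeneracy of $B$ gives $\dim A_\sigma = n-k$. The additional halfspace conditions $|x-\lambda_0| \le |x-\mu|$ refine $A_\sigma$ to a convex polytope; the polytope is full-dimensional inside $A_\sigma$ because for any $x_0 \in A_\sigma$ realizing $\sigma$ as $D(x_0)$, small perturbations within $A_\sigma$ still realize $\sigma$ as the Delaunay cell (the minimum is strict against the finitely many next-closest lattice points in a neighborhood). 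This yields $\dim V(\sigma) = n-\dim\sigma$.

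Finally I would close the correspondence. For the relative interior $V(\sigma)^0$ the preceding paragraph's argument shows $D(x) = \sigma$ for all $x \in V(\sigma)^0$, so indeed $D(\widehat\sigma) = \sigma$. Conversely, given any Voronoi cell $\tau$, pick $x$ in its relative interior; set $\sigma = D(x)$, a Delaunay cell; then the same description of $V(\sigma)$ forces $V(\sigma) = \tau$, since $V(\sigma)$ is the unique Voronoi region on which $D$ is constantly equal to $\sigma$. This yields the bijection with the stated inverses. The main obstacle is the interior/boundary bookkeeping in the first step — namely, ruling out that a generic point of $V(\sigma)$ could have strictly more stations than just the vertices of $\sigma$ — which is handled by the genericity/openness argument above combined with the observation that only finitely many lattice points can lie within any given distance of a compact set.
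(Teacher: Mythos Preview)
Your argument is correct and is the standard convex-geometry proof of Voronoi--Delaunay duality for a lattice with respect to a positive definite form. The paper, however, does not prove this proposition at all: it states the result as ``taken from \cite{MR1707764}'' and gives no argument. So there is nothing on the paper's side to compare your approach against; you have simply supplied what the paper delegates to a reference.

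One small phrasing point: the locus inside your right-hand set where $\operatorname{sta}(x)$ strictly contains $\{\lambda_0,\dots,\lambda_k\}$ is not a single proper affine subspace but rather a finite union of hyperplanes of $A_\sigma$ (one for each nearby $\mu\notin\{\lambda_i\}$). Your finiteness remark at the end is exactly what makes this union finite, and hence the complement open and dense; it would read more cleanly to say this explicitly. Equivalently, one can observe that $\{x:D(x)=\sigma\}$ is precisely the relative interior of the polytope you wrote down, since the relative boundary is where at least one inequality $|x-\lambda_0|\le |x-\mu|$ becomes an equality, forcing an extra station.
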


\section{Toric Central Extenstions}
Let $T = \Gm^r$ be a torus and $(\ftz,\ftdz)$ its group of co-characters and characters. We are interested in central extensions of $T \x \ftz$. In general they are classified by a symmetric bilinear form $B$ on $\ftz$. Specifically we take $B(,)$ to be an inner product on $\ft_\R$ which is integer valued on $\ftz$. In particular, we consider $B$ as a map $\ftz \to \ftdz$, $\lambda\mapsto \lambda^\vee$ via $\lambda^\vee(x) := B(\lambda,x)$.
  
The basic central extension associated to $B$ is denoted $C_B(T)$:
\[
1 \to \Gm \to C_B(T) \to T \x \ftz \to 1
\]
As a set $C_B(T) = \Gm \x(T \x \ftz)$; the symbol $\xx$ will be used to denote multiplication in $C_B(T)$. If $g_i \in T \x \ftz$ then the group structure is \[
(w_1,g_1)\xx(w_2,g_2) = (w_1w_2c_B(g_1,g_2), g_1g_2),
\] 
where $c_B$ is a cocycle given by
\begin{equation}\label{eq:alg central extension}
c_B\left(\ab{t_1}{\lambda_1},\ab{t_2}{\lambda_2}\right) =\lambda_2^\vee(t_1)
\end{equation}
%%don't need this factor: (-1)^{\lambda_1^\vee(\lambda_2)}
When it is clear that we are working in $C_B(T)$ we abbreviate
\[
\widetilde{u} \leftrightarrow \left(u,\ab{1}{0}\right) \ \ \  t \leftrightarrow \left(1,\ab{t}{0}\right) \ \ \ \lambda \leftrightarrow \left(1,\ab{1}{\lambda}\right).
\]
We will also use the `double' of this extension $C_{2B}(T)$ whose cocycle is given by
\begin{equation}\label{eq:alg central extension2}
c_{2B}\left(\ab{t_1}{\lambda_1},\ab{t_2}{\lambda_2}\right) = \lambda_2^\vee(t_1)\lambda_1^\vee(t_2)^{-1}
\end{equation}
Isomorphic extensions have the same commutators and for $(t,\lambda) \in T \x \ftz$ we have
\begin{align*}
\lambda \xx t \xx (-\lambda) \xx t^{-1} &= \lambda^\vee(t)^{-1} \in C_B(T)\\
\lambda \xx t \xx (-\lambda) \xx t^{-1} &= \lambda^\vee(t)^{-2} \in C_{2B}(T)
\end{align*} 
thus $C_B(T)$ and $C_{2B}(T)$ are not isomorphic.

In the sequel it will be important to incorporate the action of $\Gm$ on $T\x \ftz$ which scales the domain. Specifically for $\theta \in \Gm$ and $\lambda \in \ftz$ define $\lambda \circ \theta$ to be the morphism $z \mapsto \lambda(\theta z)$. We call this action {\it loop rotation} because of its use in loop group, see section \ref{s:loopGroups}.  To differentiate loop rotation from the central $\Gm$ we denote the former as $\lr$. Then $\lr \ltimes \ftz$ is defined by $\theta \lambda \theta^{-1} = \lambda\circ \theta$.

Loop rotation can be lifted to the central extension. If $a,b \in C_B(T)$ or $C_{2B}(T)$ then lifting the $\lr$ action requires 
\begin{equation}\label{eq:constraint}
\theta a \xx b \theta^{-1} = \theta a \theta^{-1} \xx \theta b \theta^{-1}
\end{equation}
This posses no obstruction for $C_{2B(T)}$ and the lift is automatic:

\[
\theta \left(1, \ab{t}{\lambda}\right) \theta^{-1} = \left(1, \ab{t \lambda(\theta)}{\lambda}\right)
\]
However for $C_B(T)$ the constraint \eqref{eq:constraint} is nontrivial and the lift is more interesting:
\begin{align*}
\theta \left(1, \ab{t}{\lambda}\right) \theta^{-1} &= \left(\theta^{\frac{B(\lambda,\lambda)}{2}}, \ab{t \lambda(\theta)}{\lambda}\right)\\
\theta^{\frac{B(\lambda,\lambda)}{2}}&:= \lambda^\vee(\lambda(\theta))^{\frac{1}{2}}
\end{align*}
Because of the  $\frac{1}{2}$ factor the above prescription defines an action of a double cover $\sqrt{\lr}$ of $\lr$. This issue can be circumvented by assuming $B$ is even valued on $\ftz$. 

\subsection{Representations}\label{ss:C_B(T) reps}
First we construct a representation $V$ of $C_B(T)$ and then enhance this to a representation $H = V\ox H_0$ of $\lr \ltimes C_B(T)$; here $H_0$ is a representation of $\lr$.

Let $V_{\mu^\vee}$ be the 1 dimensional representation of $T$ of weight $\mu^\vee$. Set $V = \bigoplus_{\mu \in \ftz} V_{\mu^\vee}$. Consider $\C$ as the trivial representation of $T$, then we can write $V = \C[B(\ftz)] \ox \C$ where $\mu^\vee \ox 1$ spans $V_{\mu^\vee}$. We also introduce $V_2 = \bigoplus_{\mu \in \ftz}V_{2 \mu^\vee} = \C[2 B(\ftz)] \ox \C$.  

There is a natural action of $\ftz$ on $V,V_2$ by translation 
\[
\mu^\vee \ox 1 \xrightarrow{\lambda} (\mu+\lambda)^\vee \ox 1 \ \ \  \ \ 2\mu^\vee \ox 1 \xrightarrow{\lambda} (2\mu+2\lambda)^\vee \ox 1.
\] 

The vector spaces $V,V_2$ have the structure of $C_B(T),C_{2B}(T)$ representations:

\begin{lemma}\label{l:C_B(T) action}
The following prescription 
%%

%mux1--1/lt*lam-->mu+lamx(lam(t)mu(t))/l
%mux1--->lam*t--> mu+lamx(mu(t))
\begin{align*}
\mu^\vee \ox 1 &\xrightarrow{ \left(1 ,\ab{t}{\lambda}\right)} (\mu +\lambda)^\vee\ox \mu^\vee(t)\\
2\mu^\vee \ox 1 &\xrightarrow{ \left(1 ,\ab{t}{\lambda}\right)} 2(\mu + \lambda)^\vee\ox \mu^\vee(t)^2\lambda^\vee(t)
\end{align*}

defines an action of $C_B(T)$ on $V$ and $C_{2B}(T)$ on $V$ and $V_2$ respectively  such that the central $\Cs$ acts with weight $1$.
\end{lemma}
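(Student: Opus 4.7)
The strategy is to verify that $(w,(t,\lambda)) \mapsto w\,\rho(t,\lambda)$, where $\rho(t,\lambda)$ is the linear operator prescribed by the given formula, is compatible with the group law of the appropriate central extension. Since the central $\Gm$ is embedded as $w \mapsto (w,(1,0))$ and $\rho(1,0)$ is the identity by inspection, such a $\rho$ automatically has central weight one. The remaining content is the cocycle identity
\[
\rho(t_1,\lambda_1)\,\rho(t_2,\lambda_2) \;=\; c(g_1,g_2)\,\rho(t_1 t_2,\lambda_1+\lambda_2),
\]
where $c$ is $c_B$ from \eqref{eq:alg central extension} or $c_{2B}$ from \eqref{eq:alg central extension2} as appropriate.

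For $V$ under $C_B(T)$: applying $\rho(t_2,\lambda_2)$ to $\mu^\vee \otimes 1$ yields $(\mu+\lambda_2)^\vee \otimes \mu^\vee(t_2)$, and then $\rho(t_1,\lambda_1)$ sends this to $(\mu+\lambda_1+\lambda_2)^\vee \otimes \mu^\vee(t_2)\,(\mu+\lambda_2)^\vee(t_1)$. Expanding $(\mu+\lambda_2)^\vee(t_1) = \mu^\vee(t_1)\lambda_2^\vee(t_1)$ by bilinearity of $B$, the scalar becomes $\mu^\vee(t_1 t_2)\cdot \lambda_2^\vee(t_1)$. Comparing with $\rho(t_1 t_2,\lambda_1+\lambda_2)(\mu^\vee \otimes 1) = (\mu+\lambda_1+\lambda_2)^\vee \otimes \mu^\vee(t_1 t_2)$, the discrepancy is exactly $\lambda_2^\vee(t_1) = c_B(g_1,g_2)$, as required.

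The computation on $V_2$ under $C_{2B}(T)$ is parallel but with an additional $\lambda^\vee(t)$ factor from the prescription. Tracking both powers of $\mu^\vee(t)$ and the extra factor, the two-step composition applied to $2\mu^\vee \otimes 1$ produces the scalar
\[
\mu^\vee(t_1 t_2)^2\, \lambda_1^\vee(t_1)\, \lambda_2^\vee(t_2)\,\lambda_2^\vee(t_1)^2,
\]
while $\rho(t_1 t_2,\lambda_1+\lambda_2)$ gives $\mu^\vee(t_1 t_2)^2\,(\lambda_1+\lambda_2)^\vee(t_1 t_2)$. Expanding via bilinearity, their ratio simplifies to $\lambda_2^\vee(t_1)\lambda_1^\vee(t_2)^{-1}$, matching $c_{2B}$ in \eqref{eq:alg central extension2}. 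The same template, with the evident prescription, handles the remaining case of $V$ under $C_{2B}(T)$. The only real obstacle throughout is bookkeeping — systematically expanding $(\mu+\lambda)^\vee(t')$ via the bilinearity of $B$ so that the asymmetric term $\lambda_2^\vee(t_1)$ (for $c_B$) or the antisymmetrized combination $\lambda_2^\vee(t_1)\lambda_1^\vee(t_2)^{-1}$ (for $c_{2B}$) falls out cleanly from the difference between the two-step and one-step scalars.
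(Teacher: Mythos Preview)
Your proof is correct and follows essentially the same strategy as the paper's: verify that the prescribed operators satisfy the group law of the central extension. The only organizational difference is that the paper reduces to generators --- it defines the actions of $t$ and $\lambda$ separately and checks the single commutation relation $\widetilde{\lambda^\vee(t)^{-1}}\xx t\xx\lambda = \lambda\xx t$ on a basis vector --- whereas you verify the full cocycle identity $\rho(g_1)\rho(g_2)=c(g_1,g_2)\rho(g_1g_2)$ directly for arbitrary $(t_i,\lambda_i)$. Your route is slightly more self-contained (no appeal to a presentation of $C_B(T)$), while the paper's is marginally shorter; the underlying computation is the same expansion of $(\mu+\lambda_2)^\vee(t_1)$ via bilinearity of $B$.
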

\begin{proof}
We have $C_B(T)$ is generated by $t,\lambda$ with commutation relation
\[
 \widetilde{\lambda^\vee(t)^{-1}}\xx t \xx \lambda = \left(1 ,\ab{t}{\lambda}\right) = \lambda \xx t.
\]
We compute
\begin{align*}
\lambda\circ t \colon \mu^\vee \ox 1 \xrightarrow{t} \mu^\vee \ox \mu^\vee(t) &\xrightarrow{\lambda} (\mu +\lambda)^\vee\ox \mu^\vee(t)\\
\\
\widetilde{\lambda^\vee(t)}\circ t \circ \lambda\colon \mu^\vee \ox 1 \xrightarrow{\lambda} (\mu +\lambda)^\vee \ox 1 &\xrightarrow{t } (\mu +\lambda)^\vee\ox \mu^\vee(t)\lambda^\vee(t) \\
 & \xrightarrow{\widetilde{\lambda^\vee(t)^{-1}}} (\mu +\lambda)^\vee\ox \mu^\vee(t)
\end{align*}
Hence the operators obey the commutation rule. The proof for $C_{2B}(T)$ is similar.
\end{proof}

We now incorporate the action of $\lr$.  Let $\C_{\theta^i}$ be the irreducible representation of $\lr$ of weight $\theta^i$ and let $H_0 = \C_{\theta^0} \oplus \C_{\theta}$. 

\begin{rmk}\label{rmk:H_0 replace}
For our purposes we can replace $H_0$ with any $\lr$ representation $H'_0 = \oplus_{i \geq 0} H_{\theta^i}$ where $H_{\theta^i}$ is {\it any} finite direct sum of copies of $\C_{\theta^i}$; the only constraint is that $H_{\theta^0},H_{\theta}$ are nonzero. In fact, we can even replace $H'_0$ with $\widehat{H'}_0 = \prod_i H_{\theta_i}$. This occurs when we discuss loop groups in section \ref{s:loopGroups}.
\end{rmk}

Let $H = V\ox H_0$ and $H_2 = V_2 \ox H_0$. Define an action of $\lr$ on $V_2$ by
\begin{equation}\label{eq:lrAction2}
2 \lambda^\vee \ox 1 \xrightarrow{\theta} 2 \lambda^\vee \ox \theta^{B(\lambda,\lambda)} \ \  \ \  \theta^{B(\lambda,\lambda)}:= \lambda^\vee(\lambda(\theta)).
\end{equation}
 and an action of $\sqrt{\lr}$ on $V$ by
 \begin{equation}\label{eq:lrAction}
\lambda^\vee \ox 1 \xrightarrow{\theta} \lambda^\vee \ox \theta^{\frac{B(\lambda,\lambda)}{2}}:= \lambda^\vee(\lambda(\theta))^{\frac{1}{2}}.
\end{equation}

\begin{prop}\label{p:semidirect on H}
The action of $\sqrt{\lr}$ given in \eqref{eq:lrAction} turns $H = V\ox H_0$ into a representation of $\sqrt{\lr} \ltimes C_B(T)$. Moreover each weight space $H_{\theta^i}$ is finite dimensional. Similarly, the action of $\lr$ given in \eqref{eq:lrAction2} turns $H_2 = V_2 \ox H_0$ into a representation of $\lr \ltimes C_{2B}(T)$ and each weight space $H_{2,\theta^i}$ is finite dimensional.
\end{prop}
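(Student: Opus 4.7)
My plan is to verify two things for each of the pairs $(H, \sqrt{\lr} \ltimes C_B(T))$ and $(H_2, \lr \ltimes C_{2B}(T))$: the semidirect-product compatibility between the two factor actions, and the finite-dimensionality of the weight spaces.

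Lemma~\ref{l:C_B(T) action} already provides the $C_B(T)$-action on $V$, extended trivially to $H = V \otimes H_0$, while \eqref{eq:lrAction} together with the given $\lr$-action on $H_0$ produces a $\sqrt{\lr}$-action on $H$. To upgrade these to a single representation of $\sqrt{\lr} \ltimes C_B(T)$ I must verify, as operators on $H$, the identity
\[
\theta \cdot g \cdot \theta^{-1} = \left(\theta^{B(\lambda,\lambda)/2}, \ab{t\lambda(\theta)}{\lambda}\right)
\]
for $g = (1,(t,\lambda))$ and every $\theta \in \sqrt{\lr}$; the right-hand side is the explicit lift of loop rotation computed in the paragraph preceding \eqref{eq:constraint}. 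I would check this by evaluating both sides on a basis vector $\mu^\vee \otimes 1$. The three factors on the left collect a scalar $\theta^{B(\mu+\lambda,\mu+\lambda)/2 - B(\mu,\mu)/2}\mu^\vee(t) = \theta^{B(\mu,\lambda)+B(\lambda,\lambda)/2}\mu^\vee(t)$, while the right-hand side, using Lemma~\ref{l:C_B(T) action} and $\mu^\vee(\lambda(\theta)) = \theta^{B(\mu,\lambda)}$, collects $\theta^{B(\lambda,\lambda)/2+B(\mu,\lambda)}\mu^\vee(t)$; both send $\mu^\vee$ to $(\mu+\lambda)^\vee$, so the identity holds. The case of $H_2$ and $C_{2B}(T)$ is formally identical but uses \eqref{eq:lrAction2}, the doubled formula from Lemma~\ref{l:C_B(T) action}, and the simpler lift $\theta(1,(t,\lambda))\theta^{-1} = (1,(t\lambda(\theta),\lambda))$; the whole calculation now takes place with integer exponents of $\theta$, so no double cover intervenes.

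For finite-dimensionality, a vector $\mu^\vee \otimes v \in H$ with $v$ of $H_0$-weight $j$ carries $\sqrt{\lr}$-weight proportional to $\tfrac12 B(\mu,\mu) + j$. Since $H_0$ contributes only finitely many weights below any bounded level (two for the given $H_0$, or finitely many in any bounded range per Remark~\ref{rmk:H_0 replace}), the $\theta^i$-weight space of $H$ decomposes as a finite sum over $j$ of pieces indexed by the lattice sets $\{\mu \in \ftz : B(\mu,\mu) = 2(i-j)\}$. Because $B$ is a positive-definite inner product on $\ft_\R$, each such set is the intersection of a discrete lattice with a compact ellipsoid and is therefore finite, so $H_{\theta^i}$ is finite-dimensional. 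The same reasoning, with $\tfrac12 B(\mu,\mu)$ replaced by $B(\mu,\mu)$, handles $H_2$.

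The main obstacle is bookkeeping of powers of $\theta$: the half-integer exponents in the $\sqrt{\lr}$-action on $V$ must line up precisely with the central scalar $\theta^{B(\lambda,\lambda)/2}$ in the conjugation formula. This is exactly what forces the factor $\tfrac12 B(\lambda,\lambda)$ into definition \eqref{eq:lrAction}, so once the computation is arranged as above the verification is short and explicit.
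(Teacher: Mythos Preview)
Your proposal is correct and follows essentially the same approach as the paper: verify the semidirect-product compatibility by checking the conjugation relation on weight vectors, then deduce finite-dimensionality from positive-definiteness of $B$. The only cosmetic difference is that the paper reduces to the base vector $0\ox 1$ (using that $\theta$ fixes it and that $\lambda$ carries it to $2\lambda^\vee\ox 1$) to \emph{derive} the formula \eqref{eq:lrAction2}, whereas you verify the stated formula directly on a general $\mu^\vee\ox 1$; the underlying computation is the same.
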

\begin{proof}
We check that \eqref{eq:lrAction},\eqref{eq:lrAction2} are the unique actions compatible with the conjugation in $C_B(T),C_{2B}(T)$.

We carry this out in the $C_{2B}(T)$ case; the $C_B(T)$ case is similar. By standard conjugation yoga the action of $\theta$ on $2 \lambda^\vee \ox 1$ is equal to the action of $\theta^{-1} \lambda \theta$ on $0 \ox 1$ which is
\begin{align*}
0 \ox 1 &\xrightarrow{\theta^{-1} \lambda \theta\ =\ \left(1,\ab{\lambda(\theta)}{\lambda} \right)} 2 \lambda^\vee \ox \lambda^\vee(\lambda(\theta))
\end{align*}

For the last statement notice that the $\theta^i$ weight space $H_{0,\theta^i} \subset H_0$ is finite dimensional for every $i$. Then
\[
H_{\theta^i} = \bigoplus_{B(\lambda,\lambda)\leq i} 2\lambda^\vee \ox H_{0, \theta^{i - B(\lambda,\lambda)}}
\]
and there are only finitely many $\lambda$ with $B(\lambda, \lambda)\leq i$.
\end{proof}

\subsection{Embedding}\label{ss:Embed}
Using the representations above we would like to construct some toric varieties as certain orbit closures. The basic recipe is as follows. Let $G$ be an algebraic group and $W$ a $G$ representation. Let $[Id_W]$ denote the class of the identity in $\P End(W)$. The orbit closure we seek is 
\[
X_W:= \ol{G \x G [Id_W]} \subset \P End(W).
\]
The stabilizer of $[Id_W]$ always contains $\Delta(G)$ but typically will contain slightly more. So the dense orbit will be a quotient of $G \cong G \x G/\Delta(G)$.

We carry out this construction in the case $G = \sqrt{\lr} \ltimes C_B(T)$ and $W = H$. The result is a scheme with connected components indexed by $\ftz$ and the connected component of the identity is the toric variety we are interested in. One can also do the same with $G = \lr \ltimes C_{2B}(T)$ and $W = H_2$. The results are similar but the embedded torus is slightly different.

It turns out in the $C_B(T)$ case the embedded torus is $\lr \x T/Z_B$ where $Z_B$ is finite group to be defined shortly. In the $C_{2B}(T)$ case the embedded torus is $\lr \x (T/\pm 1)/Z_{2B}$ where if $T = \ec \C[x_1^\pm, \dotsc, x_r^\pm]$ then $T/\pm 1 = \ec \C[x_1^{\pm 2}, \dotsc, x_r^{\pm 2}]$. 

Aside from the embedded torus, all the other essential features of the representations $H,H_2$ behave in the same way. From now on we focus only on the $C_B(T)$ case and its representation $H$.  

The vector space $End(H)$ is too large, it will suffice to work in the smaller subspace:
\begin{align*}
End^\Delta(H) &:= \prod_{i \geq 0} End(H_{\theta^i})
\end{align*}
Then $Id_H \in End^\Delta(H)$ and $End^\Delta(H)$ is preserved by the left and right action of $\sqrt{\lr} \ltimes C_B(T)$. 
\begin{rmk}
The space $End^\Delta(H)$ is the $\C$ points of an affine schemes. In fact, $End^\Delta(H) = \ec R(H)(\C)$ where 
\[
R(H) = \bigsqcup_{i\geq 0} Sym^*End(H_{\theta^i})
\]
where $\bigsqcup$ represents the infinite co-product in the category of commutative rings.
\end{rmk}

Similarly, we consider
\[
End^\Delta(V) := \prod_{i\geq 0} End(V_{\theta^i}).
\]

\begin{definition}
Let $Id_H \in End^\Delta(H)$ and $Id_V \in End^\Delta(V)$ be the respective identity element. Let $Orb([I_H]), Orb([I_V])$ denote the respective orbits under left and right multiplication by $\sqrt{\lr} \ltimes C_B(T)$ in $\P End^\Delta(H),\P End^\Delta(V)$.

We define
\begin{align*}
X_{B,V} \x \ftz = \overline{Orb([I_V])} \subset \P End^\Delta(V)\\
X_{B,H} \x \ftz = \overline{Orb([I_H])} \subset \P End^\Delta(H)
\end{align*}
\end{definition} 
The definition reflects that the connected components of the orbit closure are indexed by $\ftz$. The schemes $X_{B,V},X_{B,H}$ are the orbit closure under the action of $\left(\sqrt{\lr} \x \Gm \x T\right)^{\x 2}$.

Let $v = 0 \ox 1 \subset 0 \ox \C_{\theta^0}$ be a basis vector for $H_{\theta^0}$. Then $v \ox v^*$ is a point of $End^\Delta(H)$ and $\P_0 End^\Delta(H) := \{v \ox v \neq 0 \}$ is an open subscheme. Set
\[
X_{B,H,0} = \P_0 End^\Delta(H) \cap X_{B,H} 
\]

Then $X_{B,H,0}$ is a partial compactification of torus.
Specifically $B\colon \ftz \to \ftz^\vee$ exponentiates to an isogeny $\exp(B) \colon T \to T^\vee$ and the embedded torus is $T/Z_B$ where $Z_B = \ker(\exp(B))$. 

\begin{example}
If $B = \abcd{2}{-1}{-1}{2}$ and $(a,b) \in T$ then $\exp(B)(a,b) = (\frac{a^2}{b}, \frac{b^2}{a})$ and $Z_B \cong \mu_3$.
\end{example}

 \begin{prop}\label{p:X0}
The scheme $X_{B,H,0}$ is a normal toric variety for the torus $\lr \x T/Z_B$.
 \end{prop}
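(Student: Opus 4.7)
The plan is to realize $X_{B,H,0}$ in one stroke as $\ec\C[M]$ for an explicit saturated monoid $M$; normality is then automatic and the toric structure falls out of the monomial parametrization of the dense orbit.

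First I would compute the operator $g_L \xx Id_H \xx g_R^{-1}$ for generic $g_L,g_R\in\sqrt{\lr}\ltimes C_B(T)$. Lemma \ref{l:C_B(T) action} together with \eqref{eq:lrAction} shows that in the basis $\{\mu^\vee\ox e_j\}$ of $H$ this operator sends $\mu^\vee\ox e_j$ to a scalar multiple of $(\mu+\lambda_L-\lambda_R)^\vee\ox e_j$. Hence it lies in $End^\Delta(H)$ exactly when $\lambda_L=\lambda_R$, which is the condition for being in the identity component $X_{B,H,0}$. Writing $\theta=\theta_L\theta_R^{-1}$, $t=t_Lt_R^{-1}$, $w=w_Lw_R^{-1}$, the diagonal entry at index $(\mu,j)$ equals $w\cdot\mu^\vee(t)\,\theta^{B(\mu,\mu)/2+j}$. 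Dividing through by the $(0,0)$-entry (the coordinate whose non-vanishing defines $\P_0$) kills $w$ and gives the monomial map
\[
\varphi\colon(\theta,t)\;\longmapsto\;\bigl(\mu^\vee(t)\,\theta^{B(\mu,\mu)/2+j}\bigr)_{(\mu,j)\neq(0,0)}.
\]
Its kernel is read off immediately: the $(0,1)$-component forces $\theta=1$, after which the conditions $\mu^\vee(t)=1$ for all $\mu\in\ftz$ cut out exactly $\ker\exp(B)=Z_B$. The image of $\varphi$ is therefore the claimed torus $\lr\x T/Z_B$, identifying $\sqrt{\lr}$ with $\lr$ (legitimate once we restrict to the sublattice of integer characters that actually appear).

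Since $\varphi$ is a monomial map of tori, the closure of its image in the affine scheme $\P_0\cap X_{B,H,0}$ is $\ec\C[M]$, where
\[
M\;=\;\Bigl\langle\bigl(B(\mu,\mu)/2+j,\ \mu^\vee\bigr):\mu\in\ftz,\ j\in\{0,1\}\Bigr\rangle_{\Z_{\ge0}}\;\subset\;\Z\x B(\ftz).
\]
To finish I would verify that $M$ is saturated in its group completion, whence $\ec\C[M]$ is normal. Let $\sigma^\vee\subset(\Z\x B(\ftz))\otimes\R$ be the convex cone $M$ generates; the assertion is $M=\sigma^\vee\cap(\Z\x B(\ftz))$. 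The generator $(1,0)$ (from $\mu=0$, $j=1$) lets the $\theta$-exponent be freely increased by positive integers, so it suffices to show that for each $\mu^\vee\in B(\ftz)$ the minimum first coordinate in $\sigma^\vee$ above $\mu^\vee$ is realized by an honest integer decomposition $\mu=\sum\mu_i$ with $\mu_i\in\ftz$. The real minimum equals the value at $\mu$ of the lower convex envelope of the discrete paraboloid $\{(B(\mu,\mu)/2,\mu^\vee)\}_{\mu\in\ftz}$; since the facets of this envelope are Delaunay polytopes whose vertices are lattice points, that minimum can be attained by a non-negative integer combination of vertices, and saturation follows.

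The principal obstacle is this last step, reconciling the integer and real minima. It is here that the combinatorial geometry of the Delaunay tiling enters essentially, and this is exactly the content systematized by Theorem~\ref{thm:main} identifying the fan of $X_{B,H}$ with the cone on the Voronoi tiling.
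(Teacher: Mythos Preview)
Your overall strategy matches the paper's: realize $X_{B,H,0}$ as $\ec\C[\mathsf{S}]$ for the weight semigroup $\mathsf{S}$, identify the embedded torus from the group $\Z\mathsf{S}$, and deduce normality from saturation of $\mathsf{S}$. The orbit computation and the identification of the torus as $\lr\times T/Z_B$ are correct and in fact spelled out more carefully than in the paper.

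The saturation argument, however, has a genuine gap. Your assertion that the minimum first coordinate of $\sigma^\vee$ over a fixed $\mu^\vee$ equals the value of the lower convex envelope of the discrete paraboloid is false: $\sigma^\vee$ is the \emph{conical} hull of the weight set, not the epigraph of that envelope. Already for $T=\Gm$ with $B(m,n)=mn$ the cone boundary sits at $c=|x|$, whereas the lower envelope of $\{(\mu^2/2,\mu)\}$ passes through $(\nu^2/2,\nu)$ at every lattice point $\nu$; these disagree as soon as $|\nu|\ge 2$. The Delaunay-facet reasoning you invoke therefore does not address the points of $\sigma^\vee$ lying strictly below the paraboloid, which are exactly the ones in question. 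Your fallback of appealing to Theorem~\ref{thm:main} is circular: that theorem uses Proposition~\ref{p:X0} (via Proposition~\ref{p:coverX0}) to secure normality \emph{before} computing the fan. The paper's own proof is extremely terse at this same step, asserting saturation directly from the fact that $\Z\mathsf{S}$ is the full character lattice; so you are right to sense that more is needed here, but the lower-envelope route is not the correct mechanism.
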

\begin{proof}
It is routine to verify that $X_{B,H,0} = \ec \C[\mathsf{S}]$ where $\mathsf{S}$ is the semigroup generated by all the weights of $H$. If $(n,\lambda^\vee)$ is a weight then so is $(n, - \lambda^\vee)$ ( where $n$ denotes $\theta^{1/2} \mapsto \theta^{n/2}$). Also if $(n,0)$ is a weight then $n$ must be even. Therefore all differences $(n, \lambda^\vee) - (n, - \lambda^\vee)$ generate the character lattice for $\sqrt{\lr}/\pm 1 = \lr$. Hence all differences with first component zero generate the character lattice $B(\ftz)$ for $T/Z_B$.

It remains to show $X_{B,H,0}$ is normal. But this follows because $\Z\mathsf{S}$ generates the entire character lattices hence $\mathsf{S}$ defines a saturated semigroup.\end{proof}

The compactification $X_{B,H}$ is $\gxg{\lr}$ equivariant. In particular there is a conjugation action of $\ftz$.  For $\mu \in \ftz$ one readily verifies
\begin{equation}\label{eq:torusfixed}
\mu\cdot  v \ox v^* \cdot (-\mu) = v_{\mu^\vee} \ox v_{\mu^\vee}^*
\end{equation}

where $v_{\mu^\vee} = \mu^\vee \ox 1$. In an analogous fashion we can define $\P_{\mu^\vee} End^\Delta(H) = \{v_{\mu^\vee} \ox v_{\mu^\vee}^*\}$ and set
\[
X_{B,H,\mu} = X_{B,H} \cap \P_{\mu^\vee} End^\Delta(H).
\]
The open sub varieties $X_{B,H,\mu}$ give us a cover of $X_{B,H}$:

\begin{prop}\label{p:coverX0}
We have $X_{B,H,\mu} = \mu \cdot X_{B,H,0} \cdot(-\mu) $ and 
\[
X_{B,H} = \bigcup_{\mu \in \ftz} X_{B,H,\mu}.
\]
In particular $X_{B,H}$ is a normal toric variety. 
\end{prop}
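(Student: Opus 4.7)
My plan is as follows. For the first equality $X_{B,H,\mu} = \mu \cdot X_{B,H,0} \cdot (-\mu)$, the argument is formal: since $\mu \in G := \sqrt{\lr} \ltimes C_B(T)$, the conjugation map $x \mapsto \mu \cdot x \cdot (-\mu)$ is an automorphism of $\P End^\Delta(H)$ that preserves $X_{B,H}$ (because both left and right multiplication by $\mu$ preserve the orbit closure). By formula \eqref{eq:torusfixed} this conjugation sends $v \ox v^*$ to $v_{\mu^\vee} \ox v_{\mu^\vee}^*$, so it identifies $\P_0 End^\Delta(H)$ isomorphically with $\P_{\mu^\vee} End^\Delta(H)$; intersecting with $X_{B,H}$ yields the claim.

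For the union equality my plan is to exploit the torus-closure structure of $X_{B,H}$. The identity component of $G$ is the abelian direct product $G^0 = \sqrt{\lr} \x \Gm \x T$, since loop rotation commutes with the $\lambda = 0$ subgroup of $C_B(T)$ by the conjugation formula preceding \eqref{eq:constraint}. In the basis $\{v_\mu \ox e_i\}$ of $H$ the elements of $G^0$ act diagonally, so the identity component of $\mathrm{Orb}([I_H])$ is the image of the torus $\lr \x T/Z_B$ (matching the embedded torus of Proposition \ref{p:X0}), and $X_{B,H}$ is its closure. I would then classify limits via 1-parameter subgroups: for $(a, b) \in \Z \x \ftz$ the orbit $s \mapsto (s^a, b(s)) \cdot [I_H]$ has $v_\mu \ox e_i$-entry scaling like $s^{f(\mu, i)}$ with $f(\mu, i) = a(B(\mu,\mu)/2 + i) + B(b, \mu)$ (up to conventional factors of $\tfrac{1}{2}$). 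A projective limit as $s \to 0$ exists only when $a > 0$, and is supported on the set where $f$ is minimized; there $i = 0$ always wins, and the minimizing $\mu$ run over the Delaunay simplex $sta(-b/a)$ from Section \ref{s:Voronoi}. Thus every such limit has some $v_{\mu^\vee} \ox v_{\mu^\vee}^*$-coordinate nonzero, placing it in $X_{B,H,\mu}$.

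To extend this from distinguished torus-limits to arbitrary $p \in X_{B,H}$, I would argue that $p$ lies in some torus orbit whose closure contains one of the fixed points $[v_{\mu^\vee} \ox v_{\mu^\vee}^*]$ (any torus fixed point must be of this form by the weight computation, since only $\mu = \nu, i = j$ entries carry the trivial weight of $\lr \x T$), and that $X_{B,H,\mu}$ is the affine ``basin of attraction'' at this fixed point, forcing $p \in X_{B,H,\mu}$. The toric-variety conclusion then follows by gluing: each $X_{B,H,\mu}$ is a normal affine toric variety for $\lr \x T/Z_B$ by Proposition \ref{p:X0} together with the first equality, and the transition maps between the charts are equivariant restrictions of the conjugation isomorphisms. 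The main obstacle is making the ``basin of attraction'' argument rigorous in the infinite-type setting; this likely requires exhausting $X_{B,H}$ by finite-type quasi-projective toric subvarieties corresponding to bounded regions of the Voronoi tiling, where standard toric geometry applies and the needed specialization property is automatic.
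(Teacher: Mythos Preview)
Your argument for the first equality matches the paper's. For the covering statement, your approach and the paper's are contrapositives of one another: the paper simply notes that the complement $X_{B,H} \setminus \bigcup_{\mu} X_{B,H,\mu}$ is closed, torus-stable, and by Corollary~\ref{c:fan} contains no torus fixed points, hence is empty. Normality then follows from Proposition~\ref{p:X0} exactly as you say.

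The ``basin of attraction'' step you flag as the main obstacle is in fact immediate and needs no finite-type exhaustion: $X_{B,H,\mu}$ is by definition the non-vanishing locus in $X_{B,H}$ of the single torus-eigencoordinate $v_{\mu^\vee}\otimes v_{\mu^\vee}^*$, so if this coordinate vanishes at $p$ it vanishes along the entire $(\lr\times T)$-orbit of $p$ and hence along its closure; contrapositively, if $[v_{\mu^\vee}\otimes v_{\mu^\vee}^*]$ lies in that closure then $p\in X_{B,H,\mu}$. The only place infinite type genuinely enters is the assertion that every nonempty closed torus-stable subset of $X_{B,H}$ contains a fixed point (equivalently, your step ``$p$'s orbit closure contains a fixed point''). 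The paper invokes this without comment; it is justified here because the acting torus has finite rank and, for any $p\in\P End^\Delta(H)$ and any one-parameter subgroup $(n,\lambda)$ with $n>0$, the limit exists in $\P End^\Delta(H)$ (the weight function $\tfrac{n}{2}B(\mu,\mu)+B(\mu,\lambda)+nj$ from Lemma~\ref{l:n>0 limits} is bounded below and integer-valued on all of $\ftz\times\{0,1\}$, hence attains a minimum on the support of $p$), so one reaches a fixed point after finitely many such specializations.
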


To prove the proposition we need a couple of preliminary results. Let $\lambda^\dagger = (n, \lambda) \in \Z \oplus \ftz$ be a one parameter subgroup of $\lr(\C) \x T$. Let $o_H^{\lambda^\dagger} \colon \Cs \to \P End^\Delta(H)$ be the $\lambda^\dagger$ orbit of $Id_H$ and define $o_V^{\lambda^\dagger} \colon \Cs \to \P End^\Delta(V)$ similarly.

\begin{lemma}\label{l:n>0 limits}
The limits $\lim_{s\to 0}o_H^{\lambda^\dagger}(s)$, $\lim_{s\to 0}o_V^{\lambda^\dagger}(s)$ exists if and only if the quadratic function $f_{\lambda^\dagger}(\mu) = \frac{n}{2} B(\mu,\mu) + B(\mu,\lambda)$ has a global minimum on $\ftz$ if and only if $n> 0$.
\end{lemma}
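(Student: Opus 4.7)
My strategy is to reduce the claim to elementary convex analysis by computing the action of $\lambda^\dagger(s)$ weight-by-weight on $H$ and $V$.

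First I would diagonalize. In the weight basis of $H = V \otimes H_0$ consisting of vectors $\mu^\vee \otimes h$ with $h$ of $\lr$-weight $k \in \{0,1\}$, the $T$-action from Lemma \ref{l:C_B(T) action} together with the $\sqrt{\lr}$-action from equation \eqref{eq:lrAction} and the grading on $H_0$ shows that a lift of $\lambda^\dagger(s) = (s^n, \lambda(s))$ rescales $\mu^\vee \otimes h$ by
\[
s^{n(B(\mu,\mu)/2 + k) + B(\mu,\lambda)} = s^{f_{\lambda^\dagger}(\mu) + nk}.
\]
For $V$ the analogous formula holds with $k = 0$.

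Next I would translate existence of the projective limit into a combinatorial condition. Since $\lambda^\dagger(s)$ is diagonal in the weight basis, its class $o_H^{\lambda^\dagger}(s) \in \P End^\Delta(H)$ is encoded by the tuple of scalars $\{s^{f_{\lambda^\dagger}(\mu) + nk}\}$ modulo simultaneous rescaling. Using the graded coordinate ring $R(H) = \bigsqcup_{i \geq 0} Sym^* End(H_{\theta^i})^\vee$ introduced earlier, the valuative criterion says that the morphism $\Cs \to \P End^\Delta(H)$ extends to $\A^1$ exactly when the integer exponents $\{f_{\lambda^\dagger}(\mu) + nk : \mu \in \ftz,\ k \in \{0,1\}\}$ are bounded below. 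Since $k$ ranges over the finite set $\{0,1\}$, this is equivalent to $f_{\lambda^\dagger}$ being bounded below on $\ftz$; and because $f_{\lambda^\dagger}$ is quadratic and takes $\frac{1}{2}\Z$-values on the lattice, boundedness below is equivalent to attaining a global minimum on $\ftz$. This yields the first equivalence in the lemma.

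Finally I would carry out the sign analysis for nontrivial $\lambda^\dagger$. If $n > 0$, then $\tfrac{n}{2} B(\mu,\mu)$ is positive definite (as $B$ is an inner product), so $f_{\lambda^\dagger}$ is a coercive quadratic on $\ft_\R$, attains a global minimum there, and hence attains one on the lattice $\ftz$. If $n = 0$ with $\lambda \neq 0$, then $f_{\lambda^\dagger} = B(\cdot,\lambda)$ is a nonzero linear functional on $\ftz$ and is unbounded below. If $n < 0$, the leading quadratic is negative definite and $f_{\lambda^\dagger} \to -\infty$ along any half-line in $\ftz$. The main subtlety in the whole argument is the translation step, since $\P End^\Delta(H)$ is the projectivization of an infinite product rather than a finite-dimensional vector space; once the valuative criterion is formulated via the graded ring $R(H)$, the remaining work is bookkeeping on exponents.
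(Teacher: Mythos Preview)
Your argument is correct and follows essentially the same route as the paper's proof: both diagonalize the orbit map in the weight basis, read off the exponent $\tfrac{n}{2}B(\mu,\mu)+B(\mu,\lambda)+nj$ on the $(\mu,j)$-weight space, and reduce existence of the projective limit to this exponent function being bounded below, which is then settled by the sign of $n$. The only cosmetic differences are that the paper writes the argument for general $j\geq 0$ (anticipating Remark~\ref{rmk:H_0 replace}) rather than $k\in\{0,1\}$, and that you are more explicit about the valuative-criterion step for the infinite product $\P End^\Delta(H)$, which the paper leaves implicit.
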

\begin{proof}
Consider the image of $o_H^{\lambda^\dagger}$ as an infinite diagonal matrix in $\P End^\Delta(H)$ with nonzer entries corresponding to the weight spaces $\mu^\vee \ox H_{0,\theta^j}$. The weight of $\mu^\vee \ox H_{0,\theta^j}$ is $wt(\mu,j) = (\frac{1}{2}B(\mu,\mu) + j, \mu^\vee)$.  Then $o_H^{\lambda^\dagger}$ is explicitly,
\begin{align*}
orb_{\lambda^\dagger,H} \colon \Cs &\longrightarrow \P End^\Delta(H)\\
s &\mapsto \prod_{\mu,j} s^{  wt(\mu,j)  \circ \lambda^\dagger }  \\
wt(\mu,j)  \circ \lambda^\dagger :=& \frac{n}{2} B(\mu,\mu) + B( \mu,\lambda) +n j
\end{align*}
It follows that $\eta^\dagger(0)$ exists if and only if the function $(\mu,j) \mapsto wt(\mu,j)  \circ \lambda^\dagger$ has a global  minimum. This happens if and only if $n >  0$ . The difference between $wt(\mu,j)  \circ \lambda^\dagger$ and $f_{\lambda^\dagger}$ is a positive term that doesn't change the existence of a global minimum. The same proof applies to $V$.
\end{proof}

In general for any toric variety we can speak of its fan. The fan is simply the collection of possible limit points under 1 parameter subgroups organized by which one parameter subgroups go to the same limit point. If the variety is normal we can recover it from its fan but otherwise the fan of a non-normal toric variety is of little use. 

The toric variety $X_{B,V}$ is not normal in general but its fan agrees with $X_{B,H}$. We record this for later use:

\begin{cor}\label{c:fan}
Under the inclusion $V \subset H$ we have $o_H^{\lambda^\dagger}(0) =  o_V^{\lambda^\dagger}(0)$. In particular the fan of $X_{B,H}$ agrees with the fan of $X_{B,V}$. Moreover the  torus fixed points in $X_{B,H}$ are exactly the points $v_\mu^\vee \ox v_{\mu^\vee}^*$ which appear in \eqref{eq:torusfixed}.
\end{cor}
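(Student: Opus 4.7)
The plan is to compute both limits explicitly in projective coordinates using the weight formula from Lemma \ref{l:n>0 limits} and then match them under a natural equivariant inclusion $V \hookrightarrow H$. To set this up, fix a nonzero vector $h_0$ spanning the weight $\theta^0$ line of $H_0$; then $v \mapsto v \otimes h_0$ realizes $V$ as a $\sqrt{\lr} \ltimes C_B(T)$-subrepresentation of $H$, which induces a block inclusion $\iota \colon End^\Delta(V) \hookrightarrow End^\Delta(H)$ sending $A \mapsto A \otimes (h_0 h_0^*)$.

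The central computation is to write $o_H^{\lambda^\dagger}(s)$, in the basis $\{v_{\mu^\vee} \otimes h_j\}$ of $H$, as a diagonal endomorphism whose $(\mu, j)$ entry is $s^{w(\mu, j)}$ with $w(\mu, j) = f_{\lambda^\dagger}(\mu) + nj$; the corresponding expression for $o_V^{\lambda^\dagger}(s)$ is indexed only by $\mu$ with $(\mu)$ entry $s^{f_{\lambda^\dagger}(\mu)}$. The projective limit at $s = 0$ retains exactly those coordinates where the exponent attains its minimum. Since $n > 0$ (the condition in the lemma for the limit to exist) and $j \geq 0$, the minimum over $(\mu, j)$ forces $j = 0$, so both limits are supported on the common finite set $M := \operatorname{argmin}_{\mu \in \ftz} f_{\lambda^\dagger}(\mu)$, and the $H$-limit lies in $\iota(End^\Delta(V))$; this yields $o_H^{\lambda^\dagger}(0) = \iota(o_V^{\lambda^\dagger}(0))$. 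The fans then agree because a toric fan is determined by the partition of one-parameter subgroups by their limit points.

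For the fixed-point claim, a given $\lambda^\dagger$ produces $v_{\mu^\vee} \otimes v_{\mu^\vee}^*$ as its limit precisely when $M = \{\mu\}$ is a singleton, and every $\mu \in \ftz$ is realized in this way: choosing $\lambda^\dagger = (1, -\mu)$ gives $f_{\lambda^\dagger}(\nu) = \tfrac{1}{2}B(\nu - \mu, \nu - \mu) - \tfrac{1}{2}B(\mu, \mu)$, which is uniquely minimized at $\nu = \mu$ since $B$ is positive definite on $\ftz$. Conversely, any torus fixed point arises as such a limit from the dense orbit, and a limit with $|M| \geq 2$ fails to be torus-fixed in $\P End^\Delta(H)$, since the distinct characters $\mu_1^\vee \neq \mu_2^\vee$ of $T/Z_B$ (distinct by nondegeneracy of $B$) separate the two summands. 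The only real obstacle is the bookkeeping across two different ambient projective spaces, which is resolved once $\iota$ is in place.
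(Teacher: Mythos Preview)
Your proof is correct and follows essentially the same approach as the paper's own proof: both use the weight formula from Lemma~\ref{l:n>0 limits} to see that the exponent $w(\mu,j)=f_{\lambda^\dagger}(\mu)+nj$ is minimized only when $j=0$ (since $n>0$), forcing the $H$-limit into the $V$-block, and both identify the torus fixed points as the limits supported on a single weight. Your version is simply more explicit---you spell out the inclusion $\iota$, exhibit $\lambda^\dagger=(1,-\mu)$ to realize each $v_{\mu^\vee}\otimes v_{\mu^\vee}^*$, and give the character-separation argument for why $|M|\ge 2$ is not fixed---whereas the paper leaves these as immediate consequences.
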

\begin{proof}
From the proof of lemma \ref{l:n>0 limits} we see that $o_H^{\lambda^\dagger}(0)$ is necessarily supported on the weight $j = 0$ spaces. This immediately gives $o_H^{\lambda^\dagger}(0) =  o_V^{\lambda^\dagger}(0)$ for every $\lambda^\dagger$ for which the limit exists. This shows they have the same fan. Finally, a torus fixed point is necessarily of the form $o_H^{\lambda^\dagger}(0)$ and among these the only ones that are torus fixed are the ones supported on a single weight space. 
\end{proof}

\begin{proof}[proof of proposition \ref{p:coverX0}]
First we have 

\begin{align*}
X_{B,H,\mu} &= X_{B,H} \cap \P_{\mu}End^\Delta(H)\\
&=  X_{B,H} \cap \mu \cdot \P_0 End^\Delta(H) \cdot (-\mu)\\
&= \mu \cdot X_{B,H,0} \cdot(-\mu).
\end{align*}
Next, consider the subscheme
\[
X_{B,H} - \bigcup_{\mu } X_{B,H,\mu}.
\]
It is closed, torus stable, and contains no torus fixed points hence it is empty. Therefore, using proposition \ref{p:X0}, we conclude that $X_{B,H}$ is a normal toric variety.
\end{proof}

Now we can prove the main theorem.
\begin{thm}\label{thm:main}
The variety $X_{B,H}$ is a normal toric variety for $\lr \x T/Z_B$ whose fan is given by the cone on the Voronoi tiling of $\ft_\R$.
\end{thm}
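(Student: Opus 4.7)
The normality of $X_{B,H}$ and the identification of its embedded torus with $\lr \x T/Z_B$ are already established by Propositions \ref{p:X0} and \ref{p:coverX0}, so the remaining task is to identify the fan of $X_{B,H}$ with the cone on the Voronoi tiling of $\ft_\R$. The plan is to compute each maximal cone directly from the limits of one-parameter subgroups and then recognize the result as the cone over a Voronoi cell.

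I would start from Corollary \ref{c:fan}: the fan is detected on the smaller representation $V$, and the torus fixed points of $X_{B,H}$ are the points $\{v_{\mu^\vee} \otimes v_{\mu^\vee}^*\}_{\mu \in \ftz}$. By Lemma \ref{l:n>0 limits}, a cocharacter $\lambda^\dagger = (n,\lambda) \in \Z \oplus \ftz$ of $\lr \x T$ has a limit iff $n>0$, and the limit is supported on those weight spaces $\mu^\vee$ for which $f_{(n,\lambda)}(\mu) = \tfrac{n}{2}B(\mu,\mu) + B(\mu,\lambda)$ attains its global minimum on $\ftz$. For each $\mu \in \ftz$ I would define $\sigma_\mu \subset \R \oplus \ft_\R$ to be the closure of the set of $\lambda^\dagger$ with $n>0$ for which $\mu$ is the unique minimizer. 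Completing the square,
\[
f_{(n,\lambda)}(\mu') = \tfrac{n}{2}\bigl|\mu' + \lambda/n\bigr|^2 - \tfrac{1}{2n}|\lambda|^2,
\]
so $\mu$ minimizes $f_{(n,\lambda)}$ precisely when $\mu$ is a lattice point closest to $-\lambda/n$, i.e.\ when $-\lambda/n \in V(\{\mu\})$. Hence $\sigma_\mu = \{(n,\lambda) : n \geq 0,\ -\lambda \in n\cdot V(\{\mu\})\}$, which is the cone from the origin over the Voronoi cell $V(\{\mu\})$, after the sign change $\lambda \leftrightarrow -\lambda$, an automorphism of $\ftz$ preserving $B$ and hence the whole Voronoi structure.

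Next I would handle the lower-dimensional cones. A cocharacter $(n,\lambda)$ with $n>0$ produces a limit supported on the vertex set of the Delaunay cell $D(-\lambda/n) = \mathrm{conv}(sta(-\lambda/n))$, so the intersection $\bigcap_{\mu \in \sigma}\sigma_\mu$ for a Delaunay cell $\sigma$ consists of those $\lambda^\dagger$ with $\sigma \subset D(-\lambda/n)$, equivalently $-\lambda/n \in V(\sigma)$ by the Delaunay--Voronoi duality recalled in Section \ref{s:Voronoi}. Thus every face appearing in the fan is the cone over a face of the Voronoi tiling, and since the Voronoi cells tile $\ft_\R$ and meet along common faces, the collection $\{\sigma_\mu\}_{\mu \in \ftz}$ together with all of their faces assembles into a fan in $\R \oplus \ft_\R$ equal to the cone on the Voronoi tiling.

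The essential new input is the one-line completion of squares; everything else is a routine translation between the toric and combinatorial pictures. The main obstacle is really just bookkeeping: keeping track of the sign $\lambda \leftrightarrow -\lambda$, and of the lattice in which the fan lives (the cocharacter lattice of $\lr \x T/Z_B$ contains $\Z \oplus \ftz$ as a finite-index sublattice but spans the same $\R$-vector space, so the combinatorial type of the fan is unchanged).
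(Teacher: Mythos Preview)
Your proposal is correct and follows essentially the same route as the paper's own proof: reduce to $V$ via Corollary~\ref{c:fan}, use Lemma~\ref{l:n>0 limits} to see that the limit of $(n,\lambda)$ is supported on the lattice points nearest to $-\lambda/n$, and then invoke the Delaunay--Voronoi duality of Section~\ref{s:Voronoi}. Your completion of the square and your explicit handling of the maximal cones $\sigma_\mu$, the lower-dimensional faces, the sign $\lambda\leftrightarrow -\lambda$, and the finite-index lattice issue are all a bit more detailed than the paper's terse argument, but the underlying idea is identical.
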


\begin{proof}
Proposition \ref{p:coverX0} shows $X_{B,H}$ is normal and proposition \ref{p:X0} shows the embedded torus is $\lr \x T/Z_B$. It remains to compute the fan of $X_{B,H}$.  By corollary \ref{c:fan} we can compute the fan for $X_{B,V}$. 

First let us determine the possible limit points. Let $\lambda^\dagger = (n,\lambda)$ with $n>0$. Then by lemma \ref{l:n>0 limits} the limit $o^{\lambda^\dagger}_V(0)$ is controlled by the function $f_{\lambda^\dagger}(\mu) = \frac{n}{2} B(\mu,\mu) + B(\mu,\lambda)$. This function is minimized at $\mu = \frac{-\lambda}{n} \in \ft_\Q$. 

For a weight vector $v_{\mu^\vee} = \mu^\vee \ox 1$ in $V$ let $e_{\mu^\vee} = v_{\mu^\vee} \ox v_{\mu^\vee}^* \in End^\Delta(V)$. To bring in the terminology of section \ref{s:Voronoi} we see that $o^{\lambda^\dagger}_V(0) $ is supported on the $\frac{\lambda}{n}$-stations in $\ftz$; we set $sta(\lambda^\dagger):= sta(\frac{\lambda}{n})$:
\[
o^{\lambda^\dagger}_V(0) = \sum_{\mu \in sta(\lambda^\dagger)} e_{\mu^\vee} 
\]
Moreover if $\lambda^\dagger,\eta^\dagger$ are two 1-parameter subgroups then $o^{\lambda^\dagger}_V(0) = o^{\eta^\dagger}_V(0)$ if and only if $sta(\lambda^\dagger) = sta(\eta^\dagger)$. In other words, $D(\lambda^\dagger) = Conv(sta(\lambda^\dagger))$ is a Delaunay cell in $\ft_\R$ and $o^{\lambda^\dagger}_V(0) = o^{\eta^\dagger}_V(0)$ if and only if $D(\lambda^\dagger) = D(\eta^\dagger)$ which is to say if $\eta^\dagger = (m,\eta)$ then $\frac{\eta}{m}$ is a rational point on the Voronoi cell $V(D(\lambda^\dagger))$ that is dual to $D(\lambda^\dagger)$.  
\end{proof}

\subsection{Examples}
For $T = \Gm$ take $B(n,m) = nm$. Then vertices of the Voronoi tiling of $\ft_\R = \R$ are $\frac{1}{2}+\Z$ and we embed $\ft_R$ in $\R \oplus \ft_R$ as the hyperplane $1 \oplus \ft_\R$. Then $X_{B,H}$ is the toric variety with fan given in figure \ref{fig:rank1}.

\begin{figure}[htm]
\includegraphics[scale=0.3]{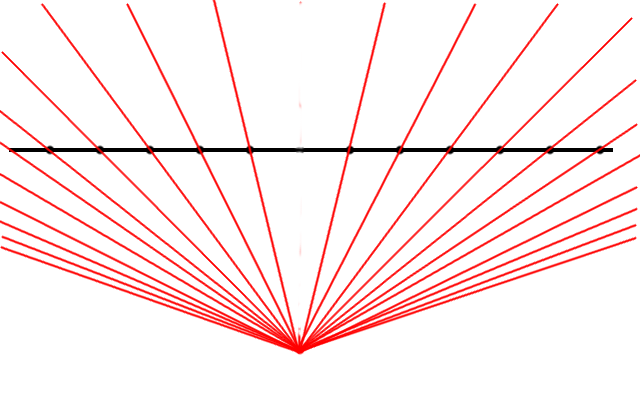}
\caption{Fan for $T = \Gm$. The black line is the hyperplane $1 \oplus \ft_\R$}
\label{fig:rank1}
\end{figure}
 
There is a morphism $X_{B,H} \to \A^1$. The generic fiber if $\Gm$ and the special fiber is an infinite chain of projective lines. 
 
Next take $T=\Gm^2$ and $B$ the inner product given by the matrix $\abcd{2}{-1}{-1}{2}$. Then Voronoi tiling is a hexagonal tiling given in figure \ref{fig:hex tri}. The fan is cone on this hexagonal tiling. It is depicted in figure \ref{fig:3dfan}.

\begin{figure}[htm]
\includegraphics[scale=0.15]{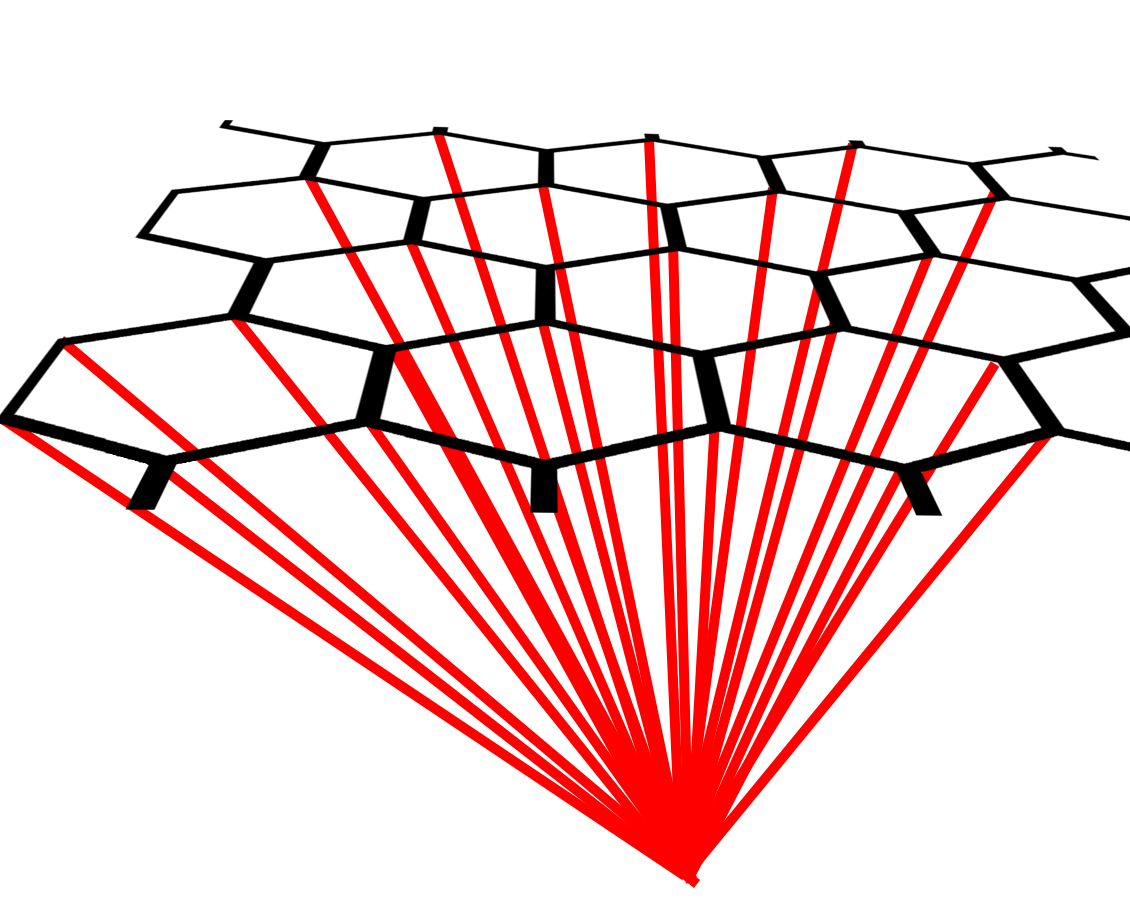}
\caption{The fan for a rank 2 torus. The hexagons show the Voronoi tiling of $1 \oplus \ft_\R$}.
\label{fig:3dfan}
\end{figure}

\section{Loop groups of Tori}\label{s:loopGroups}
The central extensions $C_B(T),C_{2B}(T)$ naturally come from loop groups. We explain the connection. 

Let $\mathbf{Aff}_\C$ denote the category of $\C$-algebras, $\mathbf{Set}$ the category of sets and $\mathbf{Grp}$ the category of groups. Let $G$ be an affine algebraic group over $\C$.
\begin{definition}
The loop group $LG\colon \mathbf{Aff}_\C \to \mathbf{Grp}$ is the functor given by $LG(R):= G(R((z)))$ where $R((z))$ is the ring of formal Laurent series with coefficients in $R$.
\end{definition}
It is known that $LG$ is represented by an ind-scheme; an increasing union of infinite dimensional schemes. Elements $g(z) \in LG(R)$ are called loops. 

There are a few natural subgroups of $LG$.
\begin{itemize}
\item positive loops $L^+G(R):= G(R[[z]])$
\item $\cU := \ker \left(L^+G \xrightarrow{z\mapsto 0} G\right)$
\item negative loops $L^-G(R):= G(R[z^{-1}])$ 
\item $\cU^- := \ker \left(L^-G \xrightarrow{z^{-1}\mapsto 0} G\right)$
\item 1 parameter subgroups $\mathfrak{g}_\Z = \hom_{\mathbf{Grp}}(\Gm,G)$
\end{itemize}
For the last subgroup, let $i \colon \ec R((z)) \to \ec \C[z^\pm]$ be the natural map, then $\mathfrak{g}_\Z \subset LG$ via $\lambda \mapsto \lambda\circ i$.

Now let us specialize to $G =T = (\C^\times)^r$. Then $LT(R) =(R((z))^\times)^r$. We can also describe $\cU,\cU^-$ more explicitly.   
\begin{align*}
\mathbb{W}(R)&=\left\{1+\sum_{i=0}^\infty a_i z^i | a_i \in R\right\}\\
\widehat{\mathbb{W}}(R) &= \left\{1+\sum_{i\in I, |I| < \infty} a_i z^i | a_i \in \sqrt{(0)}\right\}
\end{align*}

\begin{lemma}\label{l:LTfactors}
The loop group $LT$ factors as
\[LT = T\x \ftz \x \cU \x \cU^-\]
and $\cU(R) = \mathbb{W}(R)^{\dim T}$, $\cU^-(R) = \widehat{\mathbb{W}}(R)^{\dim T}$. Moreover $f \in \mathbb{W}(R)$, $g \in \widehat{\mathbb{W}}(R)$ can be uniquely expressed as infinite products
\[
f = \prod_{i\geq 1}(1-r_i z^i)\ \ \ \  g= \prod_{j\geq 1} (1 - r_j z^{-j})
\]
where in the second case only finitely many of the $r_j$ are nonzero. 
\end{lemma}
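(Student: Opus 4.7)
I would reduce to $T = \Gm$ and then analyze $R((z))^\times$ directly. Since $T = \Gm^{\dim T}$ and all the functors $LT$, $\cU$, $\cU^-$ commute with finite products, and $\ftz = \Z^{\dim T}$ as a constant group scheme, the claimed factorization decomposes coordinatewise. It thus suffices to prove $R((z))^\times \cong R^\times \times \Z \times \mathbb{W}(R) \times \widehat{\mathbb{W}}(R)$, where $\Z$ is interpreted as the constant sheaf $\ftz$.

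For the unipotent pieces: a power series $\sum_{i \geq 0} a_i z^i \in R[[z]]$ is a unit iff $a_0 \in R^\times$, giving $L^+\Gm(R) = R^\times \cdot (1 + zR[[z]])$ and hence $\cU(R) = \mathbb{W}(R)$. A polynomial $\sum_{i=0}^N a_i z^{-i} \in R[z^{-1}]$ is a unit iff $a_0 \in R^\times$ and $a_1, \dots, a_N$ are nilpotent---the nilpotency is forced because the geometric-series inverse must again be a polynomial---so $\cU^-(R) = \widehat{\mathbb{W}}(R)$.

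For the product expansions, given $f = 1 + \sum_{i\geq 1} a_i z^i \in \mathbb{W}(R)$, matching coefficients of $z^n$ in $f = \prod_{i\geq 1}(1 - r_i z^i)$ produces the triangular recursion $r_n = -a_n + P_n(r_1, \dots, r_{n-1})$, where $P_n \in \Z[x_1, \dots, x_{n-1}]$ is a universal polynomial arising as a signed sum over distinct partitions of $n$ with parts all strictly less than $n$. This uniquely determines the sequence $(r_i)$, and the infinite product converges in the $(z)$-adic topology on $R[[z]]$. The parallel recursion with $z^{-1}$ in place of $z$ yields the $r_j$'s for $g \in \widehat{\mathbb{W}}(R)$. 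For the finite-support claim, let $I = (a_1, \dots, a_N)$ where $g = 1 + \sum_{i=1}^N a_i z^{-i}$; since $I$ is generated by finitely many nilpotent elements it is itself nilpotent, say $I^M = 0$. Each $r_j$ lies in $I$, and for $n > N$ the partition $\{1, n-1\}$ in the recursion shows $r_n \in I \cdot I^{d_{n-1}}$ if $r_{n-1} \in I^{d_{n-1}}$; induction gives $r_n \in I^{n-N}$, forcing $r_n = 0$ once $n \geq N + M$.

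Finally, the multiplication map $(t, n, u, u^-) \mapsto t z^n u u^-$ is bijective: surjectivity follows from writing a general unit of $R((z))$ locally on $\ec R$ as $z^n$ times a unit of $R[[z]]$ times a residual nilpotent tail in $R[z^{-1}]$, and injectivity follows from uniqueness in the triangular systems together with the trivial intersection of the positive and negative pieces in $R((z))$. The main technical obstacle is the finite-termination of the product in $\widehat{\mathbb{W}}(R)$: the recursion a priori produces an infinite sequence of $r_j$'s, and genuine termination at a finite stage depends crucially on both the polynomiality of $g$ and the ensuing nilpotence of the ideal $I$.
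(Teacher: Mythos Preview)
The paper does not actually prove this lemma; it simply cites \cite[2.13]{XinwenNotes}. Your direct argument is therefore more than the paper offers, and the overall strategy---reduce to $T=\Gm$, identify $\cU$ and $\cU^-$ via the unit criteria in $R[[z]]$ and $R[z^{-1}]$, then solve the triangular recursions for the infinite products---is sound and is the standard route.

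One step is not correct as written. In the finite-termination argument you say that for $n>N$ ``the partition $\{1,n-1\}$ in the recursion shows $r_n\in I\cdot I^{d_{n-1}}$''. But $r_n$ is a \emph{sum} over all partitions of $n$ into at least two distinct parts, not just $\{1,n-1\}$; the term from $\{2,n-2\}$ lies only in $I\cdot I^{d_{n-2}}$, so your conclusion $r_n\in I^{n-N}$ does not follow. The fix is painless: since every $r_i\in I$ and for $n>N$ each contributing partition has parts summing to $n$, strong induction with $\sum_{i\in S}\lceil i/N\rceil \ge \lceil n/N\rceil$ gives $r_n\in I^{\lceil n/N\rceil}$, which still forces $r_n=0$ once $\lceil n/N\rceil\ge M$.

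Your surjectivity sketch is also thin: the claim that a unit of $R((z))$ can, Zariski-locally, be written as $z^n$ times a unit of $R[[z]]$ times a nilpotent tail is exactly the nontrivial content. A clean justification: for $f\in R((z))^\times$ with inverse $g$, the map $\mathfrak p\mapsto v_{\mathfrak p}(f)$ is bounded, and both $\{v_{\mathfrak p}(f)\ge n\}=\bigcap_{i<n}V(a_i)$ and $\{v_{\mathfrak p}(f)\le n\}=\bigcap_{j<-n}V(b_j)$ are closed, so the level sets are clopen; on each piece $a_n$ is a unit and $a_i$ is nilpotent for $i<n$, after which the factorization follows.
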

\begin{proof}
See \cite[2.13]{XinwenNotes}.
\end{proof}

We next discuss a central extension of $LT$ by $\Gm$; these actually arise from central extensions of $T$ by $\mathbf{K}_2$ which we recall: 
\begin{definition}
The Milnor $K$ group $\mathbf{K}_2 \colon \mathbf{Aff}_\C \to \mathbf{Set}$ is the quotient sheaf of $\Gm\ox\Gm$ by the relations $a\ox b + b \ox a$ and $a\ox(1-a)$ for $a \neq 0,1$. 
\end{definition} 

In \cite{MR1896177}, Brylinski and Deligne construct central extensions of reductive groups by $\mathbf{K}_2$ and in particular central extensions
\begin{equation}\label{eqn:K2}
1\to \mathbf{K}_2 \to \widetilde{T} \to T \to 1
\end{equation}
are classified by an integer valued quadratic form $B$ on $\ftz$ and a central extension
\[
1 \to \Gm \to \widetilde{\ftz} \to \ftz \to 1
\]
Satisfying a certain condition ( \cite[eq. (0.5)]{MR1896177}). 

To get from $\mathbf{K}_2$ to $\Gm$ we need a symbol $\{-,-\} \colon L\mathbf{K}_2 \to \Gm$. A famous example is the Contou-Carr{\`e}re given as follows. Represent an element of $L\mathbf{K}_2(R)$ by a pair $(f,g) \in \Gm(R((z)))^{\x 2}$ and using lemma \ref{l:LTfactors} write
\[
f = f_0z^n \prod_{i\geq 1}(1-f_i z^i)\prod_{j\geq 1}(1-f_j z^{-j}) \ \ \ \ g = g_0z^m \prod_{i\geq 1}(1-g_i z^i)\prod_{j\geq 1}(1-g_j z^{-j})
\]
Then the Contou-Carr{\`e}re symbol \cite{MR1272340} is given by 
\[
\{f,g\} = (-1)^{n m}\frac{f_0^m}{g_0^n}\frac{\prod_{i\geq 1}\prod_{j\geq 1}\left(1 - f_i^{j/(i,j)}g_{-j}^{i/(i,j)}\right)^{(i,j)}}{\prod_{i\geq 1}\prod_{j\geq 1}\left(1 - g_i^{j/(i,j)}f_{-j}^{i/(i,j)}\right)^{(i,j)}}.
\]

To obtain the central extension of $LT$ one applies the ``$L$'' functor to \eqref{eqn:K2} and pushes out along the chosen symbol:
\[
\xymatrix{1\ar[r] &L\mathbf{K}_2\ar[r]\ar[d]^{\{-,-\}} & L\widetilde{T}\ar[r]\ar[d] & LT\ar[r]\ar[d]^{=} & 1\\
1\ar[r] &\Gm\ar[r] & \widetilde{LT}\ar[r] & LT\ar[r] & 1
}
\]

However in the sequel we will be primarily interested in $\C$ points and the story simplifies dramatically. First, $\cU^-(\C) = 1$ and, for example, the Contou-Carr{\`e}re symbol collapses to the Tame symbol:
\[
\{f,g\} = (-1)^{n m}\frac{f_0^m}{g_0^n}.
\]
In particular for the extension $\widetilde{LT}$ given by the Contou-Carr{\`e}re symbol we have $\widetilde{LT}(\C) \cong C_{2B}(T) \x \cU$, where $C_{2B}(T)$ was described in \eqref{eq:alg central extension2}.

We briefly remark on the analytic construction of these central extension. These were constructed by Segal in \cite{MR626704}. Analytically one considers $L^{sm}U(1)^r:= C^\infty(S^1,U(1)^n)$. This is heuristically a ``compact real form'' of $LT$, but the only precise statement is that $L^{sm}U(1)^r$ contains a subgroup $L^{sm,poly}U(1)^r$ which is a compact real form of $L^{poly}T(\C) := T(\C[z^\pm]) = T\x \ftz$. 

Identifying $S^1 = \R/\Z$ then any analytic loop $\gamma$ has a logarithm:
\[
\xymatrix{
S^1 \ar[r]^\gamma & U(1)^n \\ \R \ar[r]^{f = \log(\gamma)}\ar[u]^{e^{i}} & \ft_{\R} \ar[u]^{e^i}
}.
\]
A map $f \colon \R \to \ft_\R$ exponentiates if and only if $\Delta_f := \frac{f(x + 2 \pi) - f(x)}{2 \pi}$ is constant and lies in $\ftz \subset \ft_\R$. Let $F_\Z$ denote the space of such $f$; any $\gamma \in L^{sm}U(1)^r$ can be written as $\gamma = \exp(i f)$ where $f \in F_\Z$. 

Recall we have the data of an inner product $B$ on $\ft_\R$. There is a bilinear form on $F_\Z$ given by \cite[pg.\;313]{MR626704}
\begin{equation}\label{eq:an central ext}
S(f,g) = \int_0^{2 \pi} \frac{B(f',g)}{4 \pi} d\theta + \frac{B(\Delta_f, g(0))}{2}
\end{equation}
Then \eqref{eq:an central ext} exponentiates to give the analytic central extension of $L^{sm}U(1)^r$. Finally the loops $U(1)^r \x \ftz \subset L^{sm}U(1)^r$ are presented by affine linear maps $R \to \ft_\R$ and one checks that the restriction of \eqref{eq:an central ext} recovers \eqref{eq:alg central extension} after exponentiatation.  The `double' of $S(f,g)$ is $S'(f,g) = S(f,g) - S(g,f)$ \cite[pg.\;313]{MR626704} and in a similar fashion $S'$ recovers  \eqref{eq:alg central extension2} after exponentiation.

We also have $\lr \ltimes LT$ whose set of points is just the product $\lr(R)\x LT(R)$ but conjugation by $\theta \in \lr(R)$ is $\theta \gamma(z) \theta^{-1} = \gamma( \theta z)$ hence
\[
\ab{\theta_1}{\gamma_1(z)}\cdot \ab{\theta_2}{\gamma_2(z)} := \ab{\theta_1\theta_2}{\gamma_1(\theta_2^{-1}z)\gamma_2(z)}
\]
The action of $\lr$ is called loop rotation. It lifts to the central extension and we finally obtain $\lr \ltimes \widetilde{L T}$.

\subsection{Representation and Embedding}
For the rest of this section we restrict ourselves to $\C$-points. 

Let $W = Lie(\cU(\C)) = z\C[[z]]\ox \ft_\C$. For $X \in W$ let $A(X)\colon Sym^*(W) \to Sym^*(W)$ be the multiplication operator $f \mapsto X f$; this gives a representation of $W$ on $Sym^*(W)$. 

Moreover the exponential map $\exp \colon z\C[[z]]\ox \ft_\C$ has an inverse via the standard formula $\log(1+x) = \sum_{i=1}^\infty \frac{(-1)^{i+1}}{i}x^i$. Hence we obtain a representation of $\cU(\C)$ on the completed symmetric product $H_0 := \widehat{Sym^*(W)}$ via $\gamma \mapsto \exp(A(\log(\gamma))$. 

Parallel to the discussion in \ref{ss:C_B(T) reps}, we obtain a representation of $\widetilde{LT}(\C)$ on 
\[
H = \bigoplus_{\lambda \in \ftz} H_{\lambda^\vee}
\]
With actions of $\cU(\C), T(\C), \ftz$ given by: 
\begin{itemize}
\item $H_{\lambda^\vee} = H_0$ as $\cU(\C)$  representations,
\item $T(\C)$ act with weight $\lambda^\vee$ on $H_{ \lambda^\vee}$,
\item $\mu \in \ftz$ acts by the `identity' $\mu \colon H_{\lambda^\vee} \to H_{\lambda^\vee + \mu^\vee}$,
\item the central $\Cs$ acts with weight $1$ on $H$.
\end{itemize}

Denote by $B(\ftz)$ the image of $\ftz$ in $\ftz^\vee$. Then we can equivalently describe $H = \C[B(\ftz)]\ox H_0$ where we identify $\lambda^\vee\ox H_0$ with $H_{\lambda^\vee}$. The sub vector space spanned by all $\lambda^\vee\ox 1$ is denoted $V = \C[B(\ftz)]\ox \C$. 

\begin{rmk}
Each $\ol{\nu} \in \ftz^\vee/B(\ftz)$ gives rise to another representation by replacing $H_0$ with $H_{\nu}$, however we will not need these other representations. 
\end{rmk}

We would like to consider the $\lr(\C) \ltimes \widetilde{LT}(\C)^{\x 2}$ orbit of the identity in $\P End(H)$. This can be accomplished exactly as in section \ref{ss:Embed} using the decomposition of $H$ under loop rotation $H = \bigoplus_{i} H_{\theta^i}$. However one modification is necessary. The action of $\lr(\C) \ltimes \widetilde{LT}(\C)^{\x 2}$ does not preserve $End^\Delta(H)$. One needs a slightly bigger space:
\begin{equation*}
End^+(H) := \prod_{i\leq j} Hom(H_{\theta^i}, H_{\theta^j})
\end{equation*}
Then $Id_H \in End^\Delta(H) \subset End^+(H)$ and $End^+(H)$ is preserved by the left and right action of $\lr(\C) \ltimes \widetilde{LT}(\C)$.

One can now proceed as in section \ref{ss:Embed} and look at the orbit closure of the identity. The development in section \ref{ss:Embed} did not use loop group because everything ultimately reduces to toric data. Specifically the orbit closure in the loop group case gives a compactification of the form $X_{B,H} \x \ftz \x \cU(\C)$.

The appearance of Voronoi and Delaunay tilings appear in many other places. For example they appear in the study of Berkovich spaces. Some of these connections maybe superficial but there is at least one that seems deeper. Namely the connection with Alexeev and Nakamura's work \cite{MR1707764}on degeneration of Abelian varieties.

The construction of $X_{H,B}$ essentially comes from the representation theory of $L T$. The original motivation to work with loop groups was due to their connection with the moduli space of bundles on a curve. Specifically the partial compactifications obtained here are expected to give degenerations to nodal curves of the moduli space of $T$ bundles on a smooth curve.

This is certainly true for in the rank 1 case, see specifically \cite{Tolland}. The moduli space of $T$ bundles is essentially a product of Jacobians hence the connection with degenerations of Abelian varieties.

\section{Torus Orbits}\label{s:TorusOrbits}
We begin by reveiwing the finite dimensional story. 

Let $G$ be a semisimple group and let $Z(G)$ denote the center of $G$ and $G_{ad} = G/Z(G)$.  We denote by $\ol{G_{ad}}$ the wonderful compactification of $G_{ad}$ first constructed by De Concini and Procesi \cite{MR718125}.

The variety $\ol{G_{ad}}$ is a smooth $G \x G$ equivariant compactification whose boundary is a smooth normal crossing divisor. It has a unique closed orbit isomorphic to $G/B \x G/B$ and one of the interesting features of $\ol{G_{ad}}$ is that it is a projective variety that interpolates between $G_{ad}$ and $G/B \x G/B$. It is a spherical variety (the anlouge of toric varieties for reductive groups) and is moreover a toroidal spherical variety. The closure $\ol{T_{ad}}$ of $T/Z(G) \subset G$ is naturally an important object and known to the toric variety whose fan is given by the Weyl chamber decomposition of $\ft_\R$.

On the other hand for any parabolic subgroup $P$ we have a projective variety $G/P$ and for any $p \in G/P$ we obtain a projective variety as the closure $\ol{T\cdot p}$. As $p$ varies the varieties $\ol{T\cdot p}$ vary in dimension. There is an open set $U$ of generic points such that $dim T\cdot p = \dim T$ and the toric variety $\ol{T\cdot p}$ is independent of the choice of $p \in U$.

In \cite{TorbMR1105698} a definition of a generic torus orbit is given in terms of intersections of various open cells in $G/P$. We give an different definition of generic point which more suitably generalized to the affine case. Potentially replacing $P$ with a conjugate $g P g^{-1}$ there is an irreducible highest weight representation $V(\lambda)$ such that $P=Stab([v_\lambda]) $, the stabilizer of the highest weight in $\P V(\lambda)$. Let $V(\lambda)= \bigoplus_{\chi \in \ft^\vee_\Z} V_\chi$. We say $p \in G/P \subset \P V(\lambda)$ is generic if any lift of $p$ to $V(\lambda)$ satisfies that the projection to each $V_\chi$ lies in $V_\chi - 0$. This condition is stronger than the one given in \cite{TorbMR1105698} in the sense that any generic point of in the sense just described is generic in the sense of \cite{TorbMR1105698} but the converse may not be true.

By \cite[thm\;1]{TorbMR1105698} it follows that the closure of a generic $T$ orbit in $G/B$ is a toric variety with torus $T_{ad}$ and fan given by the Weyl chamber decomposition of $\ft_\R$. The naive affine generalization of this story would take the closure of a maximal torus in an affine analogue of the wonderful compactification and compare it with the orbit closure of a generic torus orbit in the affine flag manifold $LG/\mc{B}$; here $\mc{B} = \{g \in L^+G| g(0) \in B\}$. 

The affine analogue $X^{aff}$ of the wonderful compactification for $LG$ has been constructed in \cite{Solis}. It is also proved in \cite{Solis} that the closure $Y^{wond}$ of a maximal torus in $X^{aff}$ is the cone on the Weyl alcove decomposition; this much of the generalization holds. 

Once we give a notion of generic torus orbit in $LG/\mc{B}$ one can compare $Y^{wond}$ with a toric variety $Y^{flag} \subset LG/\mc{B}$. However $Y^{wond}$ does not agree with $Y^{flag}$; the reason is any point of $LG/\mc{B}$ (generic or otherwise) lies in a finite dimensional projective variety. Therefore any generic torus orbit closure in $LG/\mc{B}$ is of finite type whereas $Y^{wond}$ is an infinite type toric variety. 

Nevertheless we can prove a relationship between $Y^{wond}$ and $Y^{flag}$. We begin with the definition of a generic torus orbit in $LG/\mc{B}$.  Let $\underline{\lambda}$ denote a regular dominant highest weight of $L G$ and let $\P V(\underline{\la})$ be the corresponding projective space on which $L G$ acts. It is known that this representation extends include loop rotation $\Gm^{rot} \ltimes LG$ and $\Gm^{rot} \x T$ is a maximal torus. Moreover the action of $\Gm^{rot}$ lifts to $V(\underline{\la})$ and gives a decomposition
\begin{equation}\label{eq:decomp}
V(\underline{\la}) = \bigoplus_{i\geq 0} V_i = \bigcup_{i \geq 0} V_{\leq i}
\end{equation}
where $\Gm^{rot}$ acts by the $i$th power on $V_i$ and each $V_i$ is finite dimensional. The orbit of the highest weight gives a projective embedding $LG/\mc{B} \subset \P V(\underline{\la}) = \cup_i \P V_{\leq i}$.

\begin{definition}
Suppose under the projective embedding of $LG/\mc{B}$ we have $p\in LG/\mc{B} \cap \P V_i$. Let $V_{\leq i}= \bigoplus_{\chi \in \ft^\vee_\Z} V_{\leq i,\chi}$ be its decomposition under $T\subset G$. We say $p$ is $i$-generic if any lift of $p$ to $V_{\leq i}$ satisfies that the projection to each $V_{\leq i,\chi}$ lies in $V_{\leq i,\chi} - 0$. Moreover define $Y^{flag,i}$ to be the closure of $\Gm^{rot}\x T$ orbit of an $i$-generic point.
\end{definition}

\begin{thm}\label{thm:Torbs}
\begin{itemize}Let $i\geq 1$.
\item[(1)] The projective toric variety $Y^{flag,i}$ contains $\Gm^{rot} \x T/Z(G)$ as a dense open subvariety. 
\item[(2)] There is an open toric subvariety $Y^{flag,i}_0$ which is normal and whose fan is the cone on a finite union of Weyl alcoves.
\item[(3)] $Y^{wond} = \bigcup_{i\geq 1} Y^{flag,i}_0$.
\end{itemize} 
\end{thm}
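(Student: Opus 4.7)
\bigskip

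\noindent\textbf{Proof plan.} The strategy is to imitate the proof of theorem \ref{thm:main}: fix an $i$-generic point $p$, realize $Y^{flag,i}$ as an orbit closure inside a finite-dimensional projective space, and read off the fan by computing limits of one-parameter subgroups exactly as in lemma \ref{l:n>0 limits}. The only new ingredient is that the weights of $V(\underline{\la})$ in $V_{\leq i}$ and the affine Weyl group action encode the alcove tiling of $\ft_\R$ in the same way the $B$-weights of $H$ encoded the Voronoi tiling.

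For part (1), I would first compute the stabilizer of an $i$-generic point $p$ inside $\Gm^{rot}\times T$. An element $(\theta,t)$ fixes $[p]$ precisely when it acts by a common scalar on every weight line $V_{\leq i,\chi}$ in which the $i$-generic lift of $p$ has a nonzero component, which by $i$-genericity means every $\chi$ appearing in $V_{\leq i}$. For $i\ge 1$ the weights of $V_{\leq i}$ include $\underline{\la}$ and its translates by the negatives of all simple affine coroots; the differences of these weights therefore generate the full character lattice of $\Gm^{rot}\x T/Z(G)$, so the stabilizer is exactly $Z(G)\subset T$ and the open orbit is $\Gm^{rot}\times T/Z(G)$.

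For part (2), I would classify limits of $\lambda^\dagger=(n,\lambda)\in \Z\oplus \ftz$ with $n>0$. As in lemma \ref{l:n>0 limits}, $\lim_{s\to 0}\lambda^\dagger(s)\cdot[p]$ is supported on those weight spaces $V_{\leq i,\chi}$ on which the linear functional $\chi\mapsto \langle\chi,\lambda^\dagger\rangle$ attains its minimum over the (finite) weight set of $V_{\leq i}$. By $i$-genericity the limit and the fan stratum it defines depend only on $\lambda^\dagger$, not on $p$; moreover two 1-parameter subgroups share a limit iff they select the same minimizing subset of weights. Rewriting the pairing against a weight $\chi=\underline{\la}+j\delta-\mu$ by completing the square, as in the proof of lemma \ref{l:n>0 limits}, the minimizing subset is precisely the set of weights of $V_{\leq i}$ whose projection to $\ft_\R$ lies in a fixed Weyl alcove containing $-\lambda/n$. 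Thus the cones in the fan of $Y^{flag,i}$ are indexed by the faces of the truncated alcove complex carved out of $\ft_\R$ by the weights in $V_{\leq i}$. Define $Y^{flag,i}_0\subset Y^{flag,i}$ to be the union of affine toric charts corresponding to those cones whose alcove lies in the interior of this truncated region; on this locus the local weight semigroups are saturated (because every weight of a full alcove is realized in $V_{\leq i}$), so the same semigroup-saturation argument as in proposition \ref{p:X0} shows $Y^{flag,i}_0$ is normal with fan the cone on a finite union of Weyl alcoves.

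For part (3), note that the embeddings $V_{\leq i}\hookrightarrow V_{\leq i+1}$ are $\Gm^{rot}\times T$-equivariant, so the open charts $Y^{flag,i}_0$ form an increasing union as $i$ grows. As $i\to\infty$ the truncated alcove regions exhaust $\ft_\R$, hence $\bigcup_i Y^{flag,i}_0$ is the normal toric variety for $\Gm^{rot}\times T/Z(G)$ whose fan is the cone on the full Weyl alcove decomposition; by the theorem of \cite{Solis} cited in the introduction this is exactly $Y^{wond}$. The main obstacle is the subtle passage from $Y^{flag,i}$ to $Y^{flag,i}_0$: the full orbit closure $Y^{flag,i}$ is finite type and in general non-normal, because the truncation of $V(\underline{\la})$ at level $i$ leaves a ``boundary'' of weights whose local semigroups fail to be saturated; verifying that after removing exactly these boundary charts one recovers the alcove-cone fan, and that no new identifications appear in the limit $i\to\infty$, is the delicate combinatorial step.
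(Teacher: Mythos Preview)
Your argument for (1) is essentially the paper's. For (2) and (3), however, you take a genuinely different route. The paper does \emph{not} compute the fan of $Y^{flag,i}$ by analyzing one-parameter-subgroup limits as in lemma \ref{l:n>0 limits}. Instead it exploits that $Y^{wond}$ is defined as an orbit closure in $\P(\prod_j End(V_j))$: any point of $Y^{wond}$ whose projection to $\P(\prod_{j\le i}End(V_j))$ is defined acts on an $i$-generic $p$, and this action carries an open piece $Y^{wond}_i\subset Y^{wond}$ onto an open subset of $Y^{flag,i}$. The paper then \emph{defines} $Y^{flag,i}_0$ as (the complement of the closure of the complement of) this image, so normality and the alcove fan are inherited directly from the already-established structure of $Y^{wond}$ in \cite{Solis}. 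Part (3) then follows because every alcove of $Y^{wond}$ eventually lands in some $Y^{wond}_i$.

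Your direct approach is in principle viable, but the step ``rewriting the pairing against a weight $\chi=\underline{\la}+j\delta-\mu$ by completing the square \dots\ the minimizing subset is precisely the set of weights whose projection lies in a fixed Weyl alcove'' is a real gap. In the torus case of theorem \ref{thm:main} the weights were the graph of the quadratic $\mu\mapsto \tfrac12 B(\mu,\mu)$, so minimizing a linear functional literally was a nearest-lattice-point problem. For an integrable highest-weight module $V(\underline{\la})$ of the affine Kac--Moody algebra the weight set is not a paraboloid over a lattice: the weights are $W^{aff}$-translates of $\underline{\la}$ together with interior weights, and the relation between ``minimizers of $\langle\chi,\lambda^\dagger\rangle$ over the weights in $V_{\le i}$'' and ``the alcove containing $-\lambda/n$'' requires nontrivial input about the weight multiplicities and the convex geometry of the $W^{aff}$-orbit. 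You acknowledge this as the ``delicate combinatorial step'' but do not indicate how to carry it out; the paper's approach is designed precisely to bypass it by transporting the answer from $Y^{wond}$, where it is already known. Likewise your saturation claim (``every weight of a full alcove is realized in $V_{\le i}$'') needs justification, whereas in the paper normality of $Y^{flag,i}_0$ is automatic since it is identified with an open subset of the normal variety $Y^{wond}$.
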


\begin{proof}
Statement (1) follows from the fact that the lattice generated by the weights of $V(\underline{\la})$ is the root lattice of $Lie(\Gm^{rot} \ltimes LG)$ and this root lattice is the character lattice of the adjoint torus $\Gm^{rot} \x T/Z(G)$. 

For (2) we briefly recall the construction of $Y^{wond}$. Namely one takes the closure in $\P(\prod_j End(V_j))$ of the $T$ orbit of the identity. Let $p \in LG/\mc{B} \subset \P V(\underline{\la})$ be an $i$-generic point. If the projection of $y \in Y^{wond} \subset \P(\prod_j End(V_j))$ to $\P (\prod_{j \leq i} V_j)$ is defined then it determined an endomorphism of $\P V_{\leq i}$ which is defined at $p$. Let $Y_i^{wond}$ be the subset of points such that $Y^{wond} - \Gm^{rot}\x T/Z(G) \subset \P V_{\leq i}$. Then $Y_i^{wond}$ is open because its complement is $\partial Y^{wond} \cap \P (\prod_{j> i} End(V_j))$.

By construction $Y^{wond}_i$ acts on $p$ and in particular preserves the $\Gm^{rot} \x T$ orbits closure $Y^{flag,i}$ of $p$. Let $Z \subset Y^{flag,i}$ be the image of $p$ under $Y^{wond}_i$. We have that $Z$ contains the maximal torus and is torus stable thus the complement $Z^c$ of $Z$ has strictly lower dimension and it torus stable. Set $Y^{flag,i}_0 = Y^{flag,i} - \overline{Z^c}$. Then $Y^{flag,i}_0$ is an open and torus stable and because $p$ is $i$-generic we have that $Y^{flag,i}_0$ is identified with an open toric subvariety of $Y^{wond}_i$. Consequently $Y^{flag,i}$ is a normal toric variety whose fan is given by the cone on a union of Weyl alcoves. Because $Y^{flag,i}$ is finite type the union is finite.

Recall the fan of $Y^{wond}$ is the cone of the Weyl alcove decomposition of $\ft_\R$. For any given alcove $A$ the boundary points of $Y^{wond}$ corresponding to $A$ lie in some $\P (\prod_{j\leq i} End(V_j))$. Choose a fundamental alove $A_0$ with $i$ as small as possible. Then all other alcoves are obtained as $w A_0 w^{-1}$ for $w \in W^{aff}$, the affine Weyl group. For any fixed $w$ we have that the boundary points of $w A_0 w^{-1}$ lie in $\P (\prod_{j\leq i} End(V_j))$ for $i$ sufficiently large. In particular, for such $i$ the boundary points of $w A_0 w^{-1}$ will lie in $Y^{flag,i}_0$. It follows that
\[
Y^{wond} = \bigcup_{i\geq 1} Y^{flag,i}_0.
\]  
\end{proof}

As a final remark, in \cite{Solis} the notion of a positive 1 parameter subgroup of $\Gm^{rot} \x T$ was defined. Namely $\Gm \xrightarrow{\lambda} \Gm^{rot} \x T$ is positive if the composition $\Gm \xrightarrow{\lambda} \Gm^{rot} \x T \xrightarrow{pr_1} \Gm^{rot}$ is given by a $t\mapsto t^n$ for $n>0$. It follows that $Y^{flag,i}_0$ consists of $\Gm^{rot} \x T/Z(G)$ together with boundary points that are limits of positive 1 parameter subgroups. But we do not know if every point of $Y^{flag,i}$ which is a limit of positive 1 parameter subgroups lies in $Y^{flag,i}_0$.

We do not know if the varieties $Y^{flag,i}$ are always normal. But the following result in the literature of toric varieties describes its normalization:

\begin{prop}
Let $\{\chi_1, \dotsc, \chi_m\}$ be a finite subset of characters of a torus $T$ such that the differences $\chi_i - \chi_j$ generate $\ft^\vee_\Z$. Let $Y$ be the closure of a generic torus orbit in $\P( \bigoplus_{i = 1}^m \C_{\chi_i})$. Then the normalization $\widetilde{Y}$ of $Y$ is a projective toric variety with polytope $P$ given by the convex hull of $\{\chi_1, \dotsc, \chi_m\}$ in $\ft^\vee_\R$ and moreover $Y$ is normal if and only if for each $\chi_i$ that is a vertex of $P$ the semigroup generated by $\chi_j - \chi_i$ is saturated.
\end{prop}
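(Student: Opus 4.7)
The plan is to recognize $Y$ as a (possibly non-normal) projective toric variety through its homogeneous coordinate ring, and then reduce both assertions to the affine principle that the normalization of $\ec \C[\mathsf{S}]$ is $\ec \C[\mathsf{S}^{\mathrm{sat}}]$, where $\mathsf{S}^{\mathrm{sat}} = \R_{\geq 0}\mathsf{S} \cap \Z\mathsf{S}$.

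First I pick a generic basepoint $v = (v_1, \dots, v_m) \in W = \bigoplus_i \C_{\chi_i}$ (all $v_i \neq 0$). The affine cone $\hat Y = \overline{(T \x \Gm)\cdot v} \subset W$ is an affine toric variety whose coordinate ring, obtained by restricting monomials on $W$, is the semigroup algebra $\C[\mathsf{S}]$ for
\[
\mathsf{S} = \Z_{\geq 0}(1, \chi_1) + \cdots + \Z_{\geq 0}(1, \chi_m) \subset \Z \oplus \ftdz,
\]
with the first coordinate recording the degree. The hypothesis that the differences $\chi_i - \chi_j$ span $\ftdz$ forces $\Z \mathsf{S} = \Z \oplus \ftdz$, while the real cone $\R_{\geq 0} \mathsf{S}$ is exactly the cone $C(P)$ over $\{1\} \x P$, so
\[
\mathsf{S}^{\mathrm{sat}} = C(P) \cap (\Z \oplus \ftdz) = \bigsqcup_{n \geq 0} \{n\} \x (nP \cap \ftdz).
\]
Since normalization of an affine toric variety is obtained by saturating its defining semigroup, $\widetilde{\hat Y} = \ec \C[\mathsf{S}^{\mathrm{sat}}]$; projectivizing away from the cone vertex yields $\widetilde Y$ as the standard normal projective toric variety $X_P$ of $P$.

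For the normality criterion, cover $Y$ by the affine charts $U_i = Y \cap \{z_i \neq 0\}$, with $\cO(U_i) = \C[\mathsf{S}_i]$ for
\[
\mathsf{S}_i = \Z_{\geq 0}(\chi_1 - \chi_i) + \cdots + \Z_{\geq 0}(\chi_m - \chi_i) \subset \ftdz.
\]
Each $U_i$ is normal iff $\mathsf{S}_i$ equals its saturation, and I claim it suffices to test this when $\chi_i$ is a vertex of $P$. Indeed, if $\chi_i$ lies in the relative interior of a positive-dimensional face $F$ of $P$, a convex expression $\chi_i = \sum_k t_k \chi_{v_k}$ in terms of the vertices $\chi_{v_k}$ of $F$ (which are also vertices of $P$) gives $\sum_k t_k (\chi_{v_k} - \chi_i) = 0$ with $t_k > 0$, so $\R_{\geq 0} \mathsf{S}_i$ is not pointed and $U_i$ contains no torus-fixed point. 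Moreover the orbit-face dictionary on $\widetilde Y = X_P$, transported along the surjective torus-equivariant map $\widetilde Y \to Y$, shows any $p \in Y$ lies in a $T$-orbit whose nonzero coordinates are exactly $\{j : \chi_j \in F_p\}$ for some face $F_p$ of $P$; since every face contains a vertex, $U_i \subseteq \bigcup_{v \text{ vertex}} U_v$. Hence $Y$ is normal iff each vertex semigroup $\mathsf{S}_v$ is saturated.

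The main obstacle is the final reduction, namely that the vertex charts alone already detect normality. The two essential inputs are the orbit-face correspondence for $\widetilde Y$ and the fact that normalization is a bijection on $T$-orbits; I would establish the former by a direct analysis of one-parameter subgroup limits $\lim_{s \to 0}\lambda(s)\cdot [v]$, which is the finite-type analogue of Lemma \ref{l:n>0 limits}, and the latter from the shared weight lattices $\Z\mathsf{S}_i = \Z\mathsf{S}_i^{\mathrm{sat}} = \ftdz$ of the matching charts on $Y$ and $\widetilde Y$.
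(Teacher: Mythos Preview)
The paper does not supply a proof of this proposition: it is introduced as ``the following result in the literature of toric varieties'' and then immediately applied. So there is no argument in the paper to compare your proposal against; your write-up is in fact \emph{more} than what the paper provides.

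Your argument is correct in outline and in detail. The identification of the homogeneous coordinate ring with $\C[\mathsf{S}]$ for $\mathsf{S} = \sum_i \Z_{\geq 0}(1,\chi_i)$, the computation $\mathsf{S}^{\mathrm{sat}} = C(P)\cap(\Z\oplus\ftdz)$, and the consequence $\widetilde Y = X_P$ are standard and you carry them out cleanly. For the normality criterion, your affine cover $U_i = \ec\C[\mathsf{S}_i]$ is the right object, and the reduction to vertex charts is valid.

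One simplification: you frame the reduction to vertex charts as ``transporting the orbit--face dictionary from $\widetilde Y$ along the normalization map,'' and then flag the bijection of $T$-orbits under normalization as something requiring further work. This detour is unnecessary. The one-parameter-subgroup computation can be done directly on $Y\subset\P(W)$: for $\lambda\in\ftz$, the limit $\lim_{s\to 0}\lambda(s)\cdot[v]$ has nonzero coordinates precisely at those $j$ for which $\langle\chi_j,\lambda\rangle$ is minimal, i.e.\ exactly the $\chi_j$ lying on the face of $P$ where $\lambda$ attains its minimum. Since every face of $P$ contains a vertex, every point of $Y$ has some vertex coordinate nonzero, so $Y=\bigcup_{v\text{ vertex}}U_v$ already. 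You never need to pass through $\widetilde Y$ for this step, and the ``main obstacle'' you identify dissolves.
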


Applying this proposition to $Y^{flag,i}$ one can say that most of the vertices of $P$ are orbits of the highest weight under the action of $W^{aff}$; these vertices define open affine pieces of $Y^{flag,i}$ which are normal. However choosing an $i$-generic point introduces at cutoff producing vertices of $P$ which are not points in the $W^{aff}$ orbit of the highest weight and this might create a non-normal compactification. 

\bibliographystyle{plain} 

\bibliography{LTfan}

\end{document}